\documentclass[preprint,5p,twocolumn]{elsarticle}

\usepackage{amsmath,amssymb,amsfonts}
\usepackage{amsthm}
\usepackage{graphicx}
\graphicspath{{Figures/}}
\usepackage{etoolbox}
\usepackage{mathrsfs}
\usepackage{algorithm}
\usepackage{algpseudocode}
\usepackage{placeins}
\usepackage{hyperref}
\hypersetup{hidelinks}
\biboptions{sort&compress}
\algrenewcommand\algorithmicrequire{\textbf{Input:}}
\algrenewcommand\algorithmicensure{\textbf{Output:}}

\newtheorem{theorem}{Theorem}
\newtheorem{proposition}{Proposition}
\newtheorem{definition}{Definition}
\newtheorem{corollary}{Corollary}

\newtheorem{remark}{Remark}

\newcommand{\mb}[1]{\mathbb{#1}}
\newcommand{\mc}[1]{\mathcal{#1}}

\newcommand{\N}{\mb{N}}
\newcommand{\R}{\mb{R}}

\newcommand{\support}[2]{\operatorname{h}(#1,#2)}
\newcommand{\supportbb}[1]{\operatorname{h}(#1,\cdot)}

\numberwithin{equation}{section}

\begin{document}
\raggedbottom
\begin{frontmatter}
\title{The Maximal Robust Positively Invariant Set for Linear Difference Inclusions}
\author[liu]{Kai Wang}
\ead{kai.wang@liu.se}
\author[liu]{Daniel Axehill}
\ead{daniel.axehill@liu.se}
\address[liu]{Division of Automatic Control, Department of Electrical
Engineering, Link\"oping University, Sweden}

\begin{abstract}
This article develops a systematic characterization and exact
computational framework for the maximal robust positively invariant set
of linear difference inclusions subject to hard state
constraints. The set is characterized as the greatest fixed
point of the robust predecessor operator, leading to three equivalent
decreasing set iterations---the standard, self-restricted, and
incremental iterations---and exact stopping tests.
Under uniform exponential stability of the matrix family and bounded
disturbances, the limiting disturbance-reachable set is characterized 
and used to derive conditions for nonemptiness and finite determination.
When the state constraint set is polyhedral, the three iterations admit
exact half-space implementations for bounded disturbances and finite or
polytopic matrix families.
Although these implementations generate the same
set sequence, they distribute computational effort differently. 
The complexity analysis and numerical study make these differences 
explicit and reveal how the underlying problem structure informs the 
choice of implementation.
\end{abstract}

\begin{keyword}
Set invariance \sep robust positively invariant sets \sep linear
difference inclusions \sep constrained dynamics
\end{keyword}
\end{frontmatter}

\section{Introduction}
\label{sec:introduction}

\subsection{Background and Related Work}
Set invariance provides a geometric framework for
certifying persistent constraint satisfaction and analyzing the
qualitative behavior of dynamical systems. It plays a fundamental role
in stability and region-of-attraction analysis, as well as in
set-theoretic controller synthesis
\cite{Khalil2002NonlinearSystems,Blanchini1999SetInvariance,
BlanchiniMiani2015}. It is also closely connected to viability theory
and reachability analysis
\cite{Aubin1991ViabilityTheory,
BertsekasRhodes1971MinimaxReachability,
MitchellBayenTomlin2005HJReachability,
KurzhanskiyVaraiya2007EllipsoidalReachability}. An invariant set
identifies a region that is preserved under the system evolution. When
contained in the prescribed state constraint set, it provides a natural
certificate of safety, thereby
supporting formal verification
\cite{Tabuada2009VerificationHybridSystems,
BeltaYordanovGol2017FormalMethods}.

The natural question of how large such an invariant region can be leads
to the study of maximal invariant sets. A broad literature has investigated
maximal invariant regions for autonomous and controlled systems,
including robust extensions that account for model uncertainty and
external disturbances. This body of work spans theoretical
characterizations, exact computational methods, and techniques for
constructing approximations of these maximal sets
\cite{Bertsekas1972InfiniteReachability,
GutmanCwikel1987MaximalConstraints,
GilbertTan1991MaximalOutputAdmissible,
KolmanovskyGilbert1998DisturbanceInvariant,
DoreaHennet1999ABInvariant,
LinAntsaklis2002RobustControlledInvariant,
Shang2004MaximalRobustControlledInvariant,
PluymersEtAl2005PolyhedralInvariant,
HirataOhta2008ExactMOAS,
RakovicFiacchini2008MaximalApproximation,
AthanasopoulosBitsoris2009MaximalControlledInvariant,
RunggerTabuada2017RobustControlledInvariant,
EsterhuizenAschenbruckStreif2020MaximalRPI,
WangJungersOng2021MaximalInvariant,
RakovicZhang2023ImplicitMPI,
Wang2023computation,
Ossareh2024complexity}. Although the underlying maximal invariance 
notions differ in whether and how
admissible control inputs and uncertainty are incorporated, they share
the objective of identifying the largest subset of the prescribed
constraint set from which admissibility can be maintained indefinitely.  
Two closely related settings can nevertheless be distinguished.
Controlled invariance allows admissible control actions to be selected
to preserve the constraints, whereas positive invariance requires the
set to be preserved directly by the system dynamics. This article
focuses on the latter setting and, in the presence of model uncertainty 
and process disturbances, on maximal robust positive invariance.

The systematic study of maximal positive invariance for constrained
discrete-time systems can be traced to the classical theory of maximal
output admissible sets for autonomous linear time-invariant systems
\cite{GilbertTan1991MaximalOutputAdmissible}. When output admissibility
is expressed through state constraints, this theory equivalently
characterizes the maximal positively invariant subset of the admissible
state set. Subsequent developments addressed, among other settings,
linear systems with bounded additive disturbances
\cite{KolmanovskyGilbert1998DisturbanceInvariant},
systems with polytopic uncertainty in the state-transition
matrix
\cite{PluymersEtAl2005PolyhedralInvariant},
polynomial systems
\cite{HirataOhta2008ExactMOAS,
RakovicVillanueva2017PolynomialMPI},
piecewise-affine systems
\cite{BenlaoukliEtAl2009PWA},
safe Markov chains
\cite{Janak2022ExactInvariant},
constrained switching and hybrid systems
\cite{AthanasopoulosSmpoukisJungers2017InvariantSets,
AthanasopoulosJungers2018Hybrid},
and time-delay systems
\cite{RakovicGielen2014InvariantFamilies}.

Within this broad literature, additive disturbances and state-transition matrix uncertainty in linear systems have largely been
treated along separate lines. Their simultaneous presence is natural in uncertain and switched linear systems, yet maximal robust positive invariance under both uncertainty mechanisms has received comparatively limited systematic treatment. A closely
related recent letter
\cite{DeyBhasin2024Computation}
considers constrained discrete-time linear time-varying systems
under a prescribed quadratically stabilizing state-feedback
law, with parametric uncertainty and additive disturbances
described by polytopes. Its closed-loop formulation falls within the class of
polytopic linear difference inclusions and is developed under
polyhedral constraints and a common quadratic Lyapunov condition. The present article instead takes linear difference inclusions as the starting point, allowing the invariant-set analysis to be developed without committing at the outset to a particular uncertainty parameterization or constraint representation.

In parallel with this question of modeling generality, the computation of maximal (robust) positively invariant sets raises a separate methodological issue. Several closely related set iterations---referred to in this article as the standard, self-restricted, and incremental iterations and formally defined in Section~\ref{subsec:TMRPIS:iteration}---can be identified across different parts of the literature. Examples of the standard iteration can be found in \cite{RakovicVillanueva2017PolynomialMPI, AthanasopoulosSmpoukisJungers2017InvariantSets, AthanasopoulosJungers2018Hybrid}; the self-restricted iteration is used in \cite{HirataOhta2008ExactMOAS, BorrelliBemporadMorari2017PredictiveControl, WangJungersOng2021MaximalInvariant, Wang2023computation, DeyBhasin2024Computation}; and the incremental iteration appears in \cite{GilbertTan1991MaximalOutputAdmissible, PluymersEtAl2005PolyhedralInvariant}. Both the standard and incremental iterations are presented in \cite{KolmanovskyGilbert1998DisturbanceInvariant, RakovicZhang2023ImplicitMPI}, with the main computational developments in these works following the incremental form. Despite their close relationship, the three iterations have largely been studied separately, and their equivalence and comparative computational properties have not been systematically examined within a common robust framework.

\subsection{Scope and Main Contributions}

Taken together, these modeling and methodological gaps motivate a unified treatment of maximal robust positive invariance directly at the level of linear difference inclusions, accommodating additive disturbances and uncertainty in the state-transition matrix within a common framework.

The first contribution is a general set-theoretic framework that extends classical results on maximal admissible and invariant sets for deterministic linear systems \cite{GilbertTan1991MaximalOutputAdmissible}, linear systems with additive disturbances \cite{KolmanovskyGilbert1998DisturbanceInvariant}, and linear systems with polytopic matrix uncertainty \cite{PluymersEtAl2005PolyhedralInvariant} to this general linear difference inclusion setting, without imposing a particular uncertainty parameterization or constraint geometry. It establishes fundamental structural properties of the maximal set and its finite iterates and yields a general descending predecessor iteration. As an important consequence, the standard, self-restricted, and incremental recursions are shown to generate exactly the same decreasing sequence and to admit equivalent exact stopping tests. To the best of our knowledge, this provides the first systematic unification of all three iterations within such a general robust framework.

The second contribution is an exact disturbance-reachability characterization of nonemptiness and finite determination. For uniformly exponentially stable matrix families and arbitrary nonempty bounded disturbance sets, the Kuratowski upper limit of the disturbance-reachable sets from the origin is shown to be the least nonempty closed robust positively invariant set. This characterization gives a necessary and sufficient condition for nonemptiness under arbitrary closed state constraints and, under stronger interior-containment and boundedness conditions, guarantees finite determination. This provides a common disturbance-reachability basis for establishing both nonemptiness and finite determination of the maximal set.

The third contribution is to expose the computational nonequivalence hidden behind the set-theoretic equivalence of the three recursions. For polyhedral state constraints and finite or polytopic matrix families, exact half-space implementations of the three recursions are derived. They are shown to propagate constraints in fundamentally different ways and consequently exhibit different complexity growth, storage requirements, and responses to redundancy removal. The analysis and numerical study therefore show that set-theoretic equivalence does not imply computational equivalence and identify the matrix-family size, determination index, and redundancy-removal strategy as the principal factors governing the choice of implementation.

\subsection{Organization}
The remainder of this article is organized as follows.
Section~\ref{sec:ldi} introduces the notation and preliminaries, formulates the linear difference inclusion, and discusses representative special cases.
Section~\ref{sec:TMRPIS} develops the theoretical framework for maximal robust positive invariance, including the equivalent set iterations and results on nonemptiness and finite determination.
Section~\ref{sec:algorithmic_computation} presents general set-theoretic algorithms, exact polyhedral implementations, and their computational complexity.
Section~\ref{sec:numerical_study} numerically validates the proposed methods and examines the effects of redundancy removal and problem scaling. Section~\ref{sec:conclusion} concludes the article. 
Proofs of some technically involved results are provided in the Appendix.

\section{Preliminaries and Linear Difference Inclusions}
\label{sec:ldi}

\subsection{Notation and Preliminaries}
The sets of real numbers and nonnegative integers are denoted by 
$\R$ and $\N$, respectively. 
The convex hull, closure, and interior of a set $\mc X$ are denoted by
$\operatorname{conv}(\mc X)$, $\operatorname{cl}(\mc X)$, and 
$\operatorname{int}(\mc X)$, respectively. 
For a set $\mc X$, let
$2^{\mc X}:=\{\mc S:\mc S\subseteq\mc X\}$ denote its power set.
A complete lattice is a partially ordered set in which every subset
admits both a supremum and an infimum; see, e.g.,
\cite{Birkhoff1967LatticeTheory,DaveyPriestley2002}.
In particular, $(2^{\mc X},\subseteq)$ is a complete lattice, with
arbitrary unions and intersections giving suprema and infima,
respectively.
Given a self-mapping $\mc T:2^{\mc X}\to2^{\mc X}$, a set
$\mc S\in2^{\mc X}$ is called a fixed point, a post-fixed point, or a
pre-fixed point of $\mc T$ if, respectively,
\[
    \mc T(\mc S)=\mc S,\qquad
    \mc S\subseteq\mc T(\mc S),\qquad
    \mc T(\mc S)\subseteq\mc S.
\]
A polyhedral set is an intersection of finitely many closed half-spaces. 
Hence, it is closed and convex. A bounded polyhedral set is called a polytope.

Given nonempty sets $\mc{X},\mc{Y}\subseteq\R^n$, their Minkowski 
sum and Pontryagin difference are defined as
$\mc{X}+\mc{Y}:=\{x+y:\ x\in\mc{X},\ y\in\mc{Y}\},$
and
$\mc{X}-\mc{Y}:=\{z\in\R^n:\ z+\mc{Y}\subseteq \mc{X}\},$
respectively. For a matrix, or scalar, $M$ of compatible dimensions, 
the image and preimage of a set $\mc{X}$ under $M$ are denoted by 
$M\mc{X}:=\{Mx:\ x\in\mc{X}\}$ and  $ M^{-1}\mc{X}:=\{x:\ Mx\in\mc{X}\},$ 
respectively. For a point $x\in\R^n$ and a nonempty set $\mc S\subseteq\R^n$, 
the distance from $x$ to $\mc S$ is defined by
\[
    \operatorname{dist}(x,\mc S)
    :=
    \inf_{s\in\mc S}\|x-s\|.
\] The support function $\supportbb{\mc X}:\R^n\to[-\infty,+\infty]$ of a set
$\mc X\subseteq\R^n$ is defined, for every direction $y\in\R^n$, by
\begin{equation*}
\support{\mc X}{y}:=\sup_{x\in\mc X} y^\top x .
\end{equation*}
We extend this notation row-wise to matrices. Namely, for a matrix
$Y\in\R^{q\times n}$, let $y_i^\top$ denote the $i$-th row of $Y$ for all 
$i=1,\ldots,q$, and let 
\begin{equation*}
    \support{\mc X}{Y}
    :=
    \begin{bmatrix}
    \support{\mc X}{y_1}\\
    \vdots\\
    \support{\mc X}{y_q}
    \end{bmatrix}.
\end{equation*}

\subsection{Linear Difference Inclusion}
We consider uncertain linear discrete-time systems subject to additive
disturbances and uncertainty in the state-transition matrix, described
by the linear difference inclusion
\begin{align} 
    &x^+\in\mc{F}(x) \text{ with}\nonumber\\ 
    \label{eq:pre:dynamics} 
    &\mc{F}(x) := \{Ax+w: A\in\mathscr A,\ w\in\mc{W}\}, 
\end{align}
where $x\in\R^n$ and $x^+\in\R^n$ denote the current and
successor states, respectively. The admissible state-transition
matrices are collected in a nonempty known family
$\mathscr A\subseteq\R^{n\times n}$, and the additive disturbance
takes values in a nonempty known set $\mc W\subseteq\R^n$. 
Thus, the uncertainty represented by $\mathscr A$ acts multiplicatively
on the state, whereas $\mc W$ describes the additive disturbance.
At each time instant, 
any pair $(A,w)\in\mathscr A\times\mc W$
is admissible, and the pair may vary arbitrarily over time. 
The state is required to satisfy the hard constraint
\begin{equation}
\label{eq:pre:constraints}
    x\in\mc X,
\end{equation}
where $\mc X\subseteq\R^n$ is a prescribed constraint set.

The linear difference inclusion~\eqref{eq:pre:dynamics} encompasses several
standard classes of uncertain and switched discrete-time linear
systems. For example, consider a linear parameter-varying or 
parameter-uncertain system with unrestricted parameter variation,
\begin{equation*}
    x^+=A(\theta)x+w,
    \qquad
    \theta\in\Theta,\quad w\in\mc W,
\end{equation*}
where $\theta$ is a scheduling or uncertain parameter. This system is
represented by~\eqref{eq:pre:dynamics} with
$\mathscr A:=\{A(\theta):\theta\in\Theta\}.$
The parameter $\theta$, and hence the matrix $A(\theta)$, may vary
arbitrarily from one time instant to the next.

A prominent special case is the polytopic uncertain system
\begin{align*}
    &x^+ = Ax+w,\\
    &A= \sum_{i=1}^{s}\lambda_i\overline A_i,
    \qquad
    \lambda_i\geq0,\quad
    \sum_{i=1}^{s}\lambda_i=1,\quad
    w\in\mc W,
\end{align*}
where
$\overline A_1,\ldots,\overline A_s\in\R^{n\times n}$
are the vertex matrices. This system is represented by choosing
$\mathscr A := \operatorname{conv} \{\overline A_1,\ldots,\overline A_s\}.$
Thus, every admissible matrix is a convex combination of the vertex
matrices. The coefficients $\lambda_i$, and hence the matrix $A$, may
vary arbitrarily over time subject to the simplex constraints above.

When $\mathscr A=\{\overline A_1,\ldots,\overline A_s\}$,
then~\eqref{eq:pre:dynamics} becomes the switched linear system with
additive disturbances
\begin{equation*}
    x^+=\overline A_{\sigma}x+w,
    \qquad
    \sigma\in\{1,\ldots,s\},\quad
    w\in\mc W,
\end{equation*}
with arbitrary switching. 
If $\mathscr A=\{A\}$, then~\eqref{eq:pre:dynamics} reduces to
the linear time-invariant system with additive disturbance
\begin{equation*}
    x^+=Ax+w,
    \qquad
    w\in\mc W.
\end{equation*}
If, in addition, $\mc W=\{0\}$, it reduces to the deterministic 
linear system
\begin{equation*}
    x^+=Ax.
\end{equation*}

\section{The Maximal Robust Positively Invariant Set}
\label{sec:TMRPIS}
This section develops the theoretical foundations for characterizing
the maximal robust positively invariant set of~\eqref{eq:pre:dynamics}
subject to the state constraint~\eqref{eq:pre:constraints}.

\subsection{Definition and Characterization}
For the linear difference inclusion 
considered in this article, robust positive invariance is defined as
follows.
\begin{definition}
\label{def:rpi}
    A set $\mc{S} \subseteq \R^n$ is said to be \emph{robust positively
    invariant} for the dynamics $x^+\in\mc{F}(x)$ under the state constraint 
    $x \in \mc{X}$ if and only if, for all $x \in \mc{S}$, it holds that 
    $\mc{F}(x) \subseteq \mc{S}$ and $x \in \mc{X}$.
\end{definition}

For every set $\mc S\subseteq\R^n$, define its one-step image under the
difference inclusion \eqref{eq:pre:dynamics} by
\begin{equation}
\label{eq:FS}
    \mc F(\mc S)
    :=
    \{Ax+w:
    x\in\mc S,\ A\in\mathscr A,\ w\in\mc W\}.
\end{equation}
Robust positive invariance of a set $\mc S$, as defined in
Definition~\ref{def:rpi}, is therefore equivalent to the chained set
inclusions
\begin{equation}
\label{eq:rpi_chained_inclusions}
    \mc F(\mc S)\subseteq\mc S\subseteq\mc X.
\end{equation}
For every target set $\mc S\subseteq\R^n$, define its one-step
robust predecessor set by
\begin{align}
    \mc F^{-1}(\mc S)
    &:=
    \{x\in\R^n:\mc F(x)\subseteq\mc S\}\bigcap\mc X
    \nonumber\\
    &=
    \left(
    \bigcap_{A\in\mathscr A}
    A^{-1}(\mc S-\mc W)
    \right)
    \bigcap\mc X .
    \label{eq:predecessorb}
\end{align}
Accordingly, the robust predecessor operator is a set-to-set mapping 
$\mc F^{-1}:2^{\R^n}\to2^{\mc X}.$
Since $\mc F^{-1}(\mc S)\subseteq\mc X$ for every
$\mc S\subseteq\R^n$, restricting its domain to $2^{\mc X}$ yields the
self-mapping 
$\mc F^{-1}:2^{\mc X}\to2^{\mc X}.$
For a given target set $\mc S$, the set $\mc F^{-1}(\mc S)$ consists of
all states $x$ in $\mc X$ from which every admissible one-step successor 
$x^+$ lies
in $\mc S$. In view of~\eqref{eq:predecessorb}, robust positive invariance 
of a set $\mc S$, as expressed in~\eqref{eq:rpi_chained_inclusions}, 
is equivalent to the predecessor inclusion
\begin{equation}
\label{eq:pre:S_subseteqF-1S}
    \mc S\subseteq\mc F^{-1}(\mc S).
\end{equation}
Thus, robust positive
invariance of $\mc S$ is equivalent to $\mc S$ being a post-fixed point
of the robust predecessor operator $\mc F^{-1}$.

The notion of the maximal robust positively invariant set utilized throughout 
this article is given by the following.
\begin{definition}
    A set $\mc M\subseteq\R^n$ is called the \emph{maximal robust
    positively invariant set} for the dynamics $x^+\in\mc{F}(x)$ under 
    the state constraint $x \in \mc{X}$ if and only if it is robust positively 
    invariant and contains every other robust positively invariant set for 
    the same dynamics and state constraint.
\end{definition}

In what follows, for typographical convenience, we use the terms 
(maximal) robust positive invariance and (maximal) robust positively 
invariant set without repeatedly specifying their dependence on the 
dynamics $x^+\in\mc F(x)$ and the state constraint $x\in\mc X$, 
since this dependence is fixed and clear from the context.

The robust predecessor operator $\mc F^{-1}$ has a basic order-preserving
property. 
For any $\mc S_1,\mc S_2\subseteq\mc X$ with
$\mc S_1\subseteq\mc S_2$, if
$x\in\mc F^{-1}(\mc S_1)$, then
$x\in\mc X$ and
$\mc F(x)\subseteq\mc S_1\subseteq\mc S_2$.
Hence,
it follows that 
\begin{equation}
    \mc F^{-1}(\mc S_1)
    \subseteq
    \mc F^{-1}(\mc S_2),
\end{equation}
so that
$\mc F^{-1}:2^{\mc X}\to2^{\mc X}$ is monotone.
Together with the complete-lattice structure of
$(2^{\mc X},\subseteq)$, this monotonicity leads to a natural
fixed-point characterization of the maximal robust positively
invariant set. The following theorem establishes its existence
and identifies it as the greatest fixed point of
$\mc F^{-1}$ with respect to set inclusion. 

\begin{theorem}
\label{thm:fixedpoint}
    The robust predecessor operator $\mc F^{-1}$ admits a greatest fixed
    point $\mc M$ with respect to set inclusion. In particular,
    \begin{equation}
    \label{eq:M_fixedpoint}
        \mc M=\mc F^{-1}(\mc M),
    \end{equation}
    and
    \begin{equation}
    \label{eq:gfp_postfixed}
        \mc M
        =
        \bigcup
        \left\{
        \mc S\subseteq\mc X:
        \mc S\subseteq\mc F^{-1}(\mc S)
        \right\}.
    \end{equation}
    Furthermore, $\mc M$ coincides with the maximal robust positively
    invariant set.
\end{theorem}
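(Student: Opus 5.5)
The plan is to apply the Knaster--Tarski argument directly to the monotone self-map $\mc F^{-1}:2^{\mc X}\to2^{\mc X}$ on the complete lattice $(2^{\mc X},\subseteq)$. I will verify the fixed-point property by hand rather than merely citing the theorem, since this takes only a few lines.

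Define $\mc M$ by the right-hand side of \eqref{eq:gfp_postfixed}, the union of all post-fixed points $\mc S\subseteq\mc X$. This is a subset of $\mc X$, and the family is nonempty because $\emptyset$ belongs to it.

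First I show that $\mc M$ is itself a post-fixed point. For every post-fixed point $\mc S$, we have $\mc S\subseteq\mc M$. Monotonicity then gives $\mc S\subseteq\mc F^{-1}(\mc S)\subseteq\mc F^{-1}(\mc M)$. Taking the union over all such $\mc S$ yields $\mc M\subseteq\mc F^{-1}(\mc M)$.

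Next I show the reverse inclusion. Applying monotonicity to $\mc M\subseteq\mc F^{-1}(\mc M)$ gives $\mc F^{-1}(\mc M)\subseteq\mc F^{-1}(\mc F^{-1}(\mc M))$. So $\mc F^{-1}(\mc M)$ is a post-fixed point contained in $\mc X$, and therefore it is one of the sets in the union. Hence $\mc F^{-1}(\mc M)\subseteq\mc M$, which establishes \eqref{eq:M_fixedpoint}.

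Greatestness follows immediately. Any fixed point is in particular a post-fixed point in $2^{\mc X}$, so it is contained in $\mc M$.

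For the final claim, I use the equivalence already recorded before the theorem: a set $\mc S\subseteq\R^n$ is robust positively invariant if and only if $\mc S\subseteq\mc F^{-1}(\mc S)$, via \eqref{eq:rpi_chained_inclusions} and \eqref{eq:pre:S_subseteqF-1S}. Any such $\mc S$ automatically satisfies $\mc S\subseteq\mc F^{-1}(\mc S)\subseteq\mc X$.

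Consequently, the robust positively invariant sets are exactly the post-fixed points in $2^{\mc X}$. Since $\mc M$ is a post-fixed point, it is robust positively invariant. By its definition as their union, it contains every robust positively invariant set. So $\mc M$ is the maximal robust positively invariant set. Uniqueness is clear, because two maximal sets would each contain the other.

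The only delicate point is bookkeeping about domains. The operator $\mc F^{-1}$ is defined on all of $2^{\R^n}$, while the lattice argument is carried out on $2^{\mc X}$. I therefore need to stress that every post-fixed point, and every robust positively invariant set, lies in $\mc X$ automatically, so restricting to $2^{\mc X}$ loses nothing. No topological or convexity assumptions on $\mathscr A$, $\mc W$, or $\mc X$ are needed, and I will note that $\mc M$ may be empty.
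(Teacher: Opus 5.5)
Your proposal is correct and follows the paper's route. Both define $\mc M$ as the union of the post-fixed points of the monotone map $\mc F^{-1}$ on $(2^{\mc X},\subseteq)$ and then use the equivalence between robust positive invariance and $\mc S\subseteq\mc F^{-1}(\mc S)$; the only difference is that you verify the Knaster--Tarski fixed-point step by hand, whereas the paper simply cites the theorem.
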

\begin{proof}
    Since $(2^{\mc X},\subseteq)$ is a complete lattice and
    $\mc F^{-1}:2^{\mc X}\to2^{\mc X}$ is monotone,
    the Knaster--Tarski fixed-point theorem
    \cite{Tarski1955LatticeFixpoint,DaveyPriestley2002}
    guarantees that $\mc F^{-1}$ admits a greatest fixed point $\mc M$ and
    characterizes it as
    $\mc M
    =
    \sup
    \left\{
    \mc S\in2^{\mc X}:
    \mc S\subseteq\mc F^{-1}(\mc S)
    \right\}.$
    Since suprema in $(2^{\mc X},\subseteq)$ are given by set union, we have
    $\mc M
    =
    \bigcup
    \left\{
    \mc S\subseteq\mc X:
    \mc S\subseteq\mc F^{-1}(\mc S)
    \right\},$
    which proves~\eqref{eq:gfp_postfixed}.
    Since $\mc M$ is a fixed point, we have
    $\mc M=\mc F^{-1}(\mc M)$, which proves~\eqref{eq:M_fixedpoint}.

    It remains to identify $\mc M$ with the maximal robust positively
    invariant set. By~\eqref{eq:pre:S_subseteqF-1S}, a set
    $\mc S\subseteq\mc X$ is robust positively invariant if and only if
    it is a post-fixed point of $\mc F^{-1}$. Since $\mc M$ is a fixed
    point, it is, in particular, a post-fixed point and is therefore robust
    positively invariant. Conversely, every robust positively invariant set
    is a post-fixed point of $\mc F^{-1}$ and is consequently contained in
    $\mc M$ by~\eqref{eq:gfp_postfixed}. Thus, $\mc M$ is the maximal
    robust positively invariant set.
\end{proof}
The next results establish some regularity properties of the
maximal robust positively invariant set.

\begin{corollary}
\label{cor:closed_M}
    If $\mc X\subseteq\R^n$ is closed, then the maximal robust positively
    invariant set $\mc M$ is closed. In particular, if $\mc X$ is
    compact, then $\mc M$ is compact.
\end{corollary}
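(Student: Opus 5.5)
The plan is to exploit the maximality in Theorem~\ref{thm:fixedpoint}: rather than describing $\mc M$ explicitly, I will show that the closure $\operatorname{cl}(\mc M)$ is itself robust positively invariant. By~\eqref{eq:gfp_postfixed}, every robust positively invariant set is contained in $\mc M$. Hence $\operatorname{cl}(\mc M)\subseteq\mc M$, and since $\mc M\subseteq\operatorname{cl}(\mc M)$ always holds, $\mc M=\operatorname{cl}(\mc M)$ is closed. If $\mc M=\emptyset$ there is nothing to prove, so I assume $\mc M$ is nonempty.

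To verify robust positive invariance of $\operatorname{cl}(\mc M)$, I check the two chained inclusions in~\eqref{eq:rpi_chained_inclusions}. First, $\mc M\subseteq\mc X$ and $\mc X$ is closed, so $\operatorname{cl}(\mc M)\subseteq\mc X$. Second, fix $x\in\operatorname{cl}(\mc M)$, $A\in\mathscr A$ and $w\in\mc W$, and pick a sequence $x_k\in\mc M$ with $x_k\to x$. By invariance of $\mc M$ (equivalently, $\mc M=\mc F^{-1}(\mc M)$ from~\eqref{eq:M_fixedpoint}), each $Ax_k+w$ lies in $\mc M$. For this fixed pair $(A,w)$ the map $z\mapsto Az+w$ is continuous, so $Ax_k+w\to Ax+w$, which gives $Ax+w\in\operatorname{cl}(\mc M)$. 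Hence $\mc F(\operatorname{cl}(\mc M))\subseteq\operatorname{cl}(\mc M)$.

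The argument works pointwise in $(A,w)$, so it needs no closedness or boundedness of $\mathscr A$ or $\mc W$. The only step that needs care is this: one might be tempted to argue via closedness of $\mc F^{-1}(\mc S)$ through~\eqref{eq:predecessorb}, which would bring in $\mc S-\mc W$ and arbitrary intersections. I avoid that route because the sequential argument above handles the intersections over $\mathscr A$ and $\mc W$ automatically.

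For the compact case, $\mc M$ is a closed subset of the compact set $\mc X$, and therefore compact by the Heine--Borel theorem.
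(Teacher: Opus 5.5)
Your proposal is correct and follows the paper's proof: show that $\operatorname{cl}(\mc M)$ is a robust positively invariant subset of $\mc X$ by a sequential continuity argument for each fixed pair $(A,w)$, conclude $\operatorname{cl}(\mc M)\subseteq\mc M$ by maximality, and obtain compactness because $\mc M$ is a closed subset of the compact set $\mc X$. The appeal to Heine--Borel is not needed, since a closed subset of a compact set is compact in any topological space, but it does no harm.
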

\begin{proof}
    Since $\mc M\subseteq\mc X$ and $\mc X$ is closed, one has $\operatorname{cl}(\mc M)\subseteq\mc X$.
    We show that $\operatorname{cl}(\mc M)$ is robust positively invariant.
    Let $x\in\operatorname{cl}(\mc M)$. Then there exists a sequence
    $\{x_j\}_{j\in\N}\subseteq\mc M$ such that $x_j\to x$. For arbitrary
    $A\in\mathscr A$ and $w\in\mc W$, robust positive invariance of
    $\mc M$ gives
    $Ax_j+w\in\mc M,
        \ j\in\N.$
    Since the mapping $x\mapsto Ax+w$ is continuous, one has
    $Ax_j+w\to Ax+w.$
    Therefore, we have $Ax+w\in\operatorname{cl}(\mc M).$
    Because $A\in\mathscr A$ and $w\in\mc W$ were arbitrary, it follows that $\mc F(x)\subseteq\operatorname{cl}(\mc M).$
    Together with
    $x\in\operatorname{cl}(\mc M)\subseteq\mc X$, this proves that
    $\operatorname{cl}(\mc M)$ is robust positively invariant.
    By maximality of $\mc M$, we have $\operatorname{cl}(\mc M)\subseteq\mc M.$
    The reverse inclusion always holds, and hence $\operatorname{cl}(\mc M)=\mc M.$
    Therefore, $\mc M$ is closed. If $\mc X$ is compact, then $\mc M$ is a closed subset 
    of the compact set $\mc X$ and is therefore compact.
\end{proof}
\begin{corollary}
\label{cor:convex_M}
    If $\mc X\subseteq\R^n$ is convex, then the maximal robust positively
    invariant set $\mc M$ is convex.
\end{corollary}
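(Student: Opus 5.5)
The plan mirrors the proof of Corollary~\ref{cor:closed_M}: rather than closure, we show that $\operatorname{conv}(\mc M)$ is robust positively invariant. Maximality of $\mc M$ from Theorem~\ref{thm:fixedpoint} then gives $\operatorname{conv}(\mc M)\subseteq\mc M$. The reverse inclusion is trivial, so $\mc M=\operatorname{conv}(\mc M)$ is convex. If $\mc M=\emptyset$ there is nothing to prove, so assume $\mc M$ is nonempty.

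First, since $\mc M\subseteq\mc X$ and $\mc X$ is convex, we have $\operatorname{conv}(\mc M)\subseteq\mc X$. Next, take $x\in\operatorname{conv}(\mc M)$ and write it as a finite convex combination
\[
x=\sum_{i=1}^{k}\mu_i x_i,\qquad x_i\in\mc M,\ \mu_i\ge0,\ \sum_{i=1}^{k}\mu_i=1 .
\]
Fix arbitrary $A\in\mathscr A$ and $w\in\mc W$. Linearity of $A$ and $\sum_i\mu_i=1$ give
\[
Ax+w=\sum_{i=1}^{k}\mu_i\,(Ax_i+w).
\]
Robust positive invariance of $\mc M$ gives $Ax_i+w\in\mc M$ for each $i$. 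Hence $Ax+w\in\operatorname{conv}(\mc M)$, so $\mc F(x)\subseteq\operatorname{conv}(\mc M)$. Together with $\operatorname{conv}(\mc M)\subseteq\mc X$, this is exactly the invariance condition~\eqref{eq:rpi_chained_inclusions} for $\operatorname{conv}(\mc M)$.

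The only delicate point is that the same pair $(A,w)$ must be applied to every $x_i$. This requires the matrix $A$ to act linearly and the additive term $w$ to be shared across the combination, which the weights summing to one guarantee. No convexity of $\mathscr A$ or $\mc W$ is needed, because each pair $(A,w)$ is handled separately. Apart from this, no real obstacle is expected: the argument uses only the lattice maximality already established in Theorem~\ref{thm:fixedpoint}.
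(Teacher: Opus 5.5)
Your proof is correct and follows essentially the same route as the paper: you show that $\operatorname{conv}(\mc M)\subseteq\mc X$ is robust positively invariant via the identity $Ax+w=\sum_i\mu_i(Ax_i+w)$, then invoke maximality of $\mc M$ to conclude $\operatorname{conv}(\mc M)=\mc M$. Your separate treatment of $\mc M=\emptyset$ is harmless but not needed, since the argument applies vacuously in that case.
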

\begin{proof}
    Since $\mc M\subseteq\mc X$ and $\mc X$ is convex, we have $\operatorname{conv}(\mc M)\subseteq\mc X$.
    We show that $\operatorname{conv}(\mc M)$ is robust positively
    invariant. Let
    $x=\sum_{i=1}^{q}\lambda_i x_i
        \in\operatorname{conv}(\mc M),$
    where $x_i\in\mc M$, $\lambda_i\geq0$, and
    $\sum_{i=1}^{q}\lambda_i=1$. For arbitrary
    $A\in\mathscr A$ and $w\in\mc W$, one has
    $Ax+w = \sum_{i=1}^{q}\lambda_i(Ax_i+w).$
    Since $\mc M$ is robust positively invariant, we have
    $Ax_i+w\in\mc M,
        \ i=1,\ldots,q.$
    It follows that
    $Ax+w\in\operatorname{conv}(\mc M).$
    Thus, $\operatorname{conv}(\mc M)$ is robust positively invariant.
    By the maximality of $\mc M$, one has $\operatorname{conv}(\mc M)\subseteq\mc M.$
    Since the reverse inclusion holds by definition,
    one has $\operatorname{conv}(\mc M)=\mc M.$
    Thus, $\mc M$ is convex.
\end{proof}

\begin{corollary}
    If $\mc X$ and $\mc W$ are centrally symmetric about the origin,
    i.e., $\mc X=-\mc X$ and $\mc W=-\mc W$, then the maximal robust
    positively invariant set $\mc M$ is centrally symmetric about the
    origin, i.e., $\mc M=-\mc M$.
\end{corollary}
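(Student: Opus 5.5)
Following the pattern of Corollaries~\ref{cor:closed_M} and~\ref{cor:convex_M}, the plan is to show that the reflected set $-\mc M$ is robust positively invariant and then invoke maximality from Theorem~\ref{thm:fixedpoint}. Linearity of every map $x\mapsto Ax$ together with the symmetry of $\mc W$ makes reflection commute with the dynamics, while symmetry of $\mc X$ preserves the constraint.

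First, I verify $-\mc M\subseteq\mc X$. If $x\in-\mc M$, then $-x\in\mc M\subseteq\mc X$, so $x\in-\mc X=\mc X$.

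Second, I verify $\mc F(-\mc M)\subseteq-\mc M$. Take $x\in-\mc M$, $A\in\mathscr A$ and $w\in\mc W$. Then
\[
    Ax+w=-\bigl(A(-x)+(-w)\bigr).
\]
Here $-x\in\mc M$, and $-w\in\mc W$ because $\mc W=-\mc W$. Robust positive invariance of $\mc M$ then gives $A(-x)+(-w)\in\mc M$, so $Ax+w\in-\mc M$.

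By the chained inclusions~\eqref{eq:rpi_chained_inclusions}, the two steps show that $-\mc M$ is robust positively invariant. Maximality of $\mc M$ then yields $-\mc M\subseteq\mc M$. Applying negation to both sides gives $\mc M\subseteq-\mc M$, hence $\mc M=-\mc M$.

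No step is a genuine obstacle. The only points needing care are that no assumption on $\mathscr A$ is required, since negation commutes with every linear map, and that the case $\mc M=\emptyset$ is covered trivially.
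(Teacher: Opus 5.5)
Your proposal is correct and matches the paper's proof essentially step for step: you get $-\mc M\subseteq\mc X$ from $\mc X=-\mc X$, verify $\mc F(-\mc M)\subseteq-\mc M$ via $Ax+w=-(A(-x)+(-w))$ with $-w\in\mc W$, invoke maximality to obtain $-\mc M\subseteq\mc M$, and negate for the reverse inclusion. Your side remarks are accurate but not needed: no assumption on $\mathscr A$ is required, and the case $\mc M=\emptyset$ is trivial.
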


\begin{proof}
    Since $\mc M\subseteq\mc X$ and $\mc X=-\mc X$, one has
    $-\mc M\subseteq\mc X$. We show that $-\mc M$ is robust positively
    invariant. Let $x\in-\mc M$. Then $-x\in\mc M$. For arbitrary
    $A\in\mathscr A$ and $w\in\mc W$, symmetry of $\mc W$ gives
    $-w\in\mc W$. Since $\mc M$ is robust positively invariant, we have
    $A(-x)+(-w)\in\mc M.$
    Taking negatives yields $-(A(-x)+(-w))=Ax+w\in-\mc M.$
    Hence, $-\mc M$ is robust positively invariant. By maximality of
    $\mc M$, one has $-\mc M\subseteq\mc M$. Taking negatives gives the
    reverse inclusion, and therefore $\mc M=-\mc M$.
\end{proof}

\begin{corollary}
Let $\mc M(\mathscr A,\mc W,\mc X)$ denote the maximal robust
positively invariant set associated with
$(\mathscr A,\mc W,\mc X)$. Then, for any $\alpha>0$, we have 
\begin{equation}
    \mc M(\mathscr A,\alpha\mc W,\alpha\mc X)
    =
    \alpha\mc M(\mathscr A,\mc W,\mc X).
\end{equation}
Equivalently, we have
\begin{equation}
    \mc M(\mathscr A,\mc W,\alpha\mc X)
    =
    \alpha\mc M(
        \mathscr A,\frac{1}{\alpha}\mc W,\mc X).
\end{equation}
\end{corollary}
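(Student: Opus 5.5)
The plan is to mirror the preceding corollaries: show that scaling by $\alpha$ sends robust positively invariant sets of one problem to robust positively invariant sets of the other, and then use maximality in both directions. Write $\mc M:=\mc M(\mathscr A,\mc W,\mc X)$ and $\mc M_\alpha:=\mc M(\mathscr A,\alpha\mc W,\alpha\mc X)$.

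\textbf{Forward transfer.} First I will show that $\alpha\mc M$ is robust positively invariant for $(\mathscr A,\alpha\mc W,\alpha\mc X)$. Since $\mc M\subseteq\mc X$, we have $\alpha\mc M\subseteq\alpha\mc X$. Take $z\in\alpha\mc M$, so that $z=\alpha x$ with $x\in\mc M$. Take also $A\in\mathscr A$ and $\alpha w\in\alpha\mc W$ with $w\in\mc W$. Because $\mathscr A$ consists of linear maps, which commute with scalar multiplication,
\[
Az+\alpha w=\alpha(Ax+w).
\]
Robust positive invariance of $\mc M$ gives $Ax+w\in\mc M$, so $Az+\alpha w\in\alpha\mc M$. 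By Theorem~\ref{thm:fixedpoint}, which identifies the greatest fixed point with the maximal robust positively invariant set, this yields $\alpha\mc M\subseteq\mc M_\alpha$.

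\textbf{Reverse transfer.} The same computation with $1/\alpha$ in place of $\alpha$, applied to the triple $(\mathscr A,\alpha\mc W,\alpha\mc X)$, shows that $\frac1\alpha\mc M_\alpha$ is robust positively invariant for $(\mathscr A,\mc W,\mc X)$. Hence $\frac1\alpha\mc M_\alpha\subseteq\mc M$, that is, $\mc M_\alpha\subseteq\alpha\mc M$. Combining the two inclusions proves the first identity. Note that $\alpha>0$ is exactly what makes scaling invertible; nothing else about $\mathscr A$, $\mc W$, or $\mc X$ is needed.

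\textbf{Equivalent form.} Apply the first identity to the triple $(\mathscr A,\frac1\alpha\mc W,\mc X)$ with the same $\alpha$. Since $\alpha\cdot\frac1\alpha\mc W=\mc W$, this gives
\[
\mc M(\mathscr A,\mc W,\alpha\mc X)=\alpha\,\mc M(\mathscr A,\tfrac1\alpha\mc W,\mc X).
\]
There is no real obstacle. The only point requiring care is correct bookkeeping of the scaled triple, so that the linearity step matches the disturbance set $\alpha\mc W$ rather than $\mc W$. The empty-set case is covered automatically, since $\alpha\emptyset=\emptyset$.
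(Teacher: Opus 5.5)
Your proposal is correct and follows the paper's proof essentially step for step. You use the same linearity identity $Az+\alpha w=\alpha(Ax+w)$ to show $\alpha\mc M$ is robust positively invariant for the scaled triple, apply the same argument with $1/\alpha$ for the reverse inclusion, and obtain the second identity by substituting $\frac{1}{\alpha}\mc W$ for $\mc W$.
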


\begin{proof}
Let $\mc M:=\mc M(\mathscr A,\mc W,\mc X)$. For any
$x=\alpha y\in\alpha\mc M$, $A\in\mathscr A$, and
$w=\alpha v\in\alpha\mc W$, we have $y\in\mc M$ and $v\in\mc W$, so that
$Ax+w
    =
    \alpha(Ay+v)
    \in
    \alpha\mc M.$
Thus, $\alpha\mc M$ is robust positively invariant in
$\alpha\mc X$, and hence it holds that 
$\alpha\mc M
    \subseteq
    \mc M(\mathscr A,\alpha\mc W,\alpha\mc X).$
Applying the same argument with scaling factor $\frac{1}{\alpha}$ gives
the reverse inclusion. The second relation follows by replacing
$\mc W$ with $\frac{1}{\alpha}\mc W$.
\end{proof}

\subsection{Descending Set Iteration and Equivalent Recursions}
\label{subsec:TMRPIS:iteration}
Theorem~\ref{thm:fixedpoint} identifies the maximal robust positively
invariant set $\mc M$ as the greatest fixed point of the robust
predecessor operator $\mc F^{-1}$. This fixed-point characterization
naturally leads to a descending Kleene-type set iteration for computing
$\mc M$ (see, e.g., \cite{Cousot1979,StoltenbergHansen1994}). In contrast
to the usual ascending Kleene iteration for least fixed points, the
present iteration starts from a pre-fixed point
of $\mc F^{-1}$ containing $\mc M$ and
generates a decreasing sequence of sets. In addition to being monotone, 
the robust predecessor operator $\mc F^{-1}$
preserves arbitrary nonempty intersections.  Indeed, for every nonempty
index set $\mathcal I$, every collection
$\{\mc S_i\}_{i\in\mathcal I}\subseteq 2^{\mc X}$, and every
$x\in\mathbb R^n$,
\[
    \begin{aligned}
        x\in\mc F^{-1}\left(\bigcap_{i\in\mathcal I}\mc S_i\right)
        &\iff
        x\in\mc X
        \ \text{and}\
        \mc F(x)\subseteq\mc S_i,
        \quad\forall i\in\mathcal I
        \\
        &\iff
        x\in\bigcap_{i\in\mathcal I}\mc F^{-1}(\mc S_i).
    \end{aligned}
\]
Hence, we have  
\begin{equation}
\label{eq:predecessor_intersection}
    \mc F^{-1}\left(\bigcap_{i\in\mathcal I}\mc S_i\right)
    =
    \bigcap_{i\in\mathcal I}\mc F^{-1}(\mc S_i).
\end{equation}
Together with the recursive construction and the decreasing property
of the iterates, this property ensures that their intersection is a
fixed point. The following theorem shows that this fixed point is
$\mc M$. 

\begin{theorem}
\label{thm:pre-fixed-iteration}
    Let $\mc M$ denote the greatest fixed point of $\mc F^{-1}$, and let
    $\mc P\in 2^{\mc X}$ satisfy
    \begin{equation}
    \label{eq:prefixed_upper_bound}
        \mc M\subseteq\mc P
        \qquad \text{and} \qquad 
        \mc F^{-1}(\mc P)\subseteq\mc P.
    \end{equation}
    Consider the sequence $\{\mc P_k\}_{k\in\N}$ defined by
    \begin{equation}
    \label{eq:fixed_point_sequence}
        \mc P_0:=\mc P,
        \qquad
        \mc P_{k+1}:=\mc F^{-1}(\mc P_k),
        \quad k\in\N.
    \end{equation}
    Then:
    \begin{enumerate}
    \renewcommand{\labelenumi}{(\roman{enumi})}
        \item the sequence is decreasing and contains $\mc M$, namely,
        \begin{equation}
            \mc M\subseteq\mc P_{k+1}\subseteq\mc P_k,
            \qquad k\in\N;
        \end{equation}

        \item the intersection of the sequence equals $\mc M$, namely,
        \begin{equation}
        \label{eq:prefixed_iteration_limit}
            \mc M
            =
            \mc P_\infty
            :=
            \bigcap_{k\in\N}\mc P_k.
        \end{equation}
    \end{enumerate}
\end{theorem}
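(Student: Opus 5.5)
Both parts follow by induction from the monotonicity of $\mc F^{-1}$ together with the intersection-preservation property~\eqref{eq:predecessor_intersection}.

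For (i), we first show $\mc P_{k+1}\subseteq\mc P_k$ by induction on $k$. The base case $k=0$ is exactly the pre-fixed hypothesis $\mc F^{-1}(\mc P)\subseteq\mc P$ in~\eqref{eq:prefixed_upper_bound}. If $\mc P_{k+1}\subseteq\mc P_k$, then monotonicity of $\mc F^{-1}$ on $2^{\mc X}$ gives $\mc P_{k+2}=\mc F^{-1}(\mc P_{k+1})\subseteq\mc F^{-1}(\mc P_k)=\mc P_{k+1}$. All iterates lie in $2^{\mc X}$ because $\mc P\subseteq\mc X$ and $\mc F^{-1}$ maps into $2^{\mc X}$. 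A parallel induction gives $\mc M\subseteq\mc P_k$. The base case is the hypothesis $\mc M\subseteq\mc P$. If $\mc M\subseteq\mc P_k$, then monotonicity and~\eqref{eq:M_fixedpoint} give $\mc M=\mc F^{-1}(\mc M)\subseteq\mc F^{-1}(\mc P_k)=\mc P_{k+1}$.

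For (ii), part (i) already gives $\mc M\subseteq\mc P_\infty$. For the reverse inclusion it suffices, by Theorem~\ref{thm:fixedpoint} and~\eqref{eq:gfp_postfixed}, to show that $\mc P_\infty\subseteq\mc X$ is a post-fixed point; we will in fact show it is a fixed point. Apply~\eqref{eq:predecessor_intersection} with the nonempty index set $\N$:
\[
\mc F^{-1}(\mc P_\infty)=\bigcap_{k\in\N}\mc F^{-1}(\mc P_k)=\bigcap_{k\in\N}\mc P_{k+1}=\bigcap_{k\geq1}\mc P_k .
\]
Because the sequence is decreasing, $\bigcap_{k\geq1}\mc P_k=\bigcap_{k\in\N}\mc P_k=\mc P_\infty$: adding $\mc P_0\supseteq\mc P_1$ does not change the intersection. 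Hence $\mc F^{-1}(\mc P_\infty)=\mc P_\infty$, so $\mc P_\infty$ is a (post-)fixed point of $\mc F^{-1}$ contained in $\mc X$, and~\eqref{eq:gfp_postfixed} yields $\mc P_\infty\subseteq\mc M$.

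The only step that needs care is passing $\mc F^{-1}$ through the countable intersection, which would fail without intersection preservation. Here it is already established in~\eqref{eq:predecessor_intersection}, so no continuity or compactness assumption on $\mc X$, $\mathscr A$, or $\mc W$ is needed. The remaining point is to confirm that dropping $\mc P_0$ from the intersection is harmless, which follows from the decreasing property in (i).
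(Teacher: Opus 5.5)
Your proposal is correct and follows essentially the same argument as the paper. Part (i) is proved by induction using monotonicity of $\mc F^{-1}$ and $\mc M=\mc F^{-1}(\mc M)$. Part (ii) uses the intersection-preservation property together with the decreasing property to show that $\mc P_\infty$ is a fixed point, which is therefore contained in the greatest fixed point $\mc M$.
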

\begin{proof}
    Since $\mc F^{-1}$ is monotone and
    $\mc M=\mc F^{-1}(\mc M)$, the inclusion
    $\mc M\subseteq\mc P_0$ implies
    $\mc M = \mc F^{-1}(\mc M) \subseteq \mc F^{-1}(\mc P_0) = \mc P_1.$
    Repeating this argument inductively gives
    $\mc M\subseteq\mc P_k,\ k\in\N.$
    By~\eqref{eq:prefixed_upper_bound},
    $\mc P_1\subseteq\mc P_0$. If
    $\mc P_k\subseteq\mc P_{k-1}$ for some $k\geq1$, then monotonicity of
    $\mc F^{-1}$ yields
    $\mc P_{k+1}
        =
        \mc F^{-1}(\mc P_k)
        \subseteq
        \mc F^{-1}(\mc P_{k-1})
        =
        \mc P_k.$
    Thus, $ \mc M\subseteq\mc P_{k+1}\subseteq\mc P_k,\ k\in\N.$
    Let $\mc P_\infty:=\bigcap_{k\in\N}\mc P_k.$ By the intersection-preserving property
    \eqref{eq:predecessor_intersection}, we have
    \begin{align*}
        \mc F^{-1}(\mc P_\infty)
        =
        \bigcap_{k\in\N}\mc F^{-1}(\mc P_k)=
        \bigcap_{k\in\N}\mc P_{k+1}=
        \mc P_\infty,
    \end{align*}
    where the last equality follows because the sequence is decreasing.
    Hence, $\mc P_\infty$ is a fixed point of $\mc F^{-1}$.
    Since $\mc M$ is the greatest fixed point, we have 
    $\mc P_\infty\subseteq\mc M.$
    On the other hand, $\mc M\subseteq\mc P_k$ for every $k\in\N$, and
    therefore
    $\mc M\subseteq\mc P_\infty.$
    It follows that
    $\mc P_\infty=\mc M.$
\end{proof}

The state constraint set $\mc X$ provides the canonical initialization
of~\eqref{eq:fixed_point_sequence}. Indeed, every robust positively
invariant set is contained in $\mc X$, and hence
$\mc M\subseteq\mc X$. Moreover, the definition of the robust
predecessor operator gives
$\mc F^{-1}(\mc X)\subseteq\mc X.$
Thus, $\mc X$ is a pre-fixed point
of $\mc F^{-1}$ containing $\mc M$. 
Accordingly, we define the sequence  
$\{\mc X_k\}_{k\in\N}$ by
\begin{equation}
\label{eq:comp:iteration}
    \mc X_0:=\mc X,\qquad 
    \mc X_{k+1}:=\mc F^{-1}(\mc X_k),
    \quad k\in\N.
\end{equation}
By~\eqref{eq:predecessorb}, the recursion can be written explicitly as
\begin{equation*}
    \mc X_{k+1}
    =
    \left(
    \bigcap_{A\in\mathscr A}
    A^{-1}(\mc X_k-\mc W)
    \right)
    \bigcap\mc X,
    \qquad k\in\N.
\end{equation*}
This update retains precisely those states in $\mc X$ from which every
admissible one-step successor belongs to $\mc X_k$. Repeated application
of this one-step predecessor construction yields the following $k$-step
trajectory characterization of $\mc X_k$:
    \begin{align}
        \mc X_k= \bigl\{x_0\in\R^n \,:\ x_j\in\mc X,\  
        &j=0,1,\ldots,k,\nonumber\\
        &\forall A_0,\ldots,A_{k-1}\in\mathscr A,\nonumber\\ 
        \label{eq:comp:xk_finite_horizon}
        &\forall w_0,\ldots,w_{k-1}\in\mc W \bigr\},    
    \end{align}
where the sequence $\{x_j\}_{j=0}^{k}$ is generated by 
$x_{j+1}=A_jx_j+w_j,\ j=0,1,\ldots,k-1$. 
Thus, $\mc X_k$ consists of all initial states from which every
admissible $k$-step trajectory remains in $\mc X$. 

Next, set $\mc Y_0:=\mc X$. For each $k\geq1$, define $\mc Y_k$ as the set
of initial states for which every admissible trajectory satisfies the
state constraint at time $k$:
\begin{align}
    \mc Y_k
    :=
    \bigl\{
    x_0\in\R^n:\ x_k\in\mc X,\ 
    &\forall A_0,\ldots,A_{k-1}\in\mathscr A,\nonumber\\
    \label{eq:Y_k}
    &\forall w_0,\ldots,w_{k-1}\in\mc W
    \bigr\},
\end{align}
where $x_k$ is generated by the recursion
$x_{j+1}=A_jx_j+w_j$, $j=0,1,\ldots,k-1$.

For each $k\geq1$, let $\mathfrak B_k(\mathscr A)$ denote the set of
matrix-product tuples
\begin{equation}
\label{eq:Psi}
    \boldsymbol{\Psi}_k
    =
    (\Psi_{k,0},\ldots,\Psi_{k,k})
\end{equation}
generated by admissible matrix sequences
$A_0,\ldots,A_{k-1}\in\mathscr A$ according to
\begin{equation*}
    \Psi_{k,k}:=I,
    \qquad
    \Psi_{k,i}:=A_{k-1}A_{k-2}\cdots A_i,
    \quad i=0,\ldots,k-1.
\end{equation*}
Starting from $x_0$, the state reached after $k$ steps along the
corresponding admissible matrix and disturbance sequences is
\begin{equation*}
    x_k
    =
    \Psi_{k,0}x_0
    +
    \sum_{i=0}^{k-1}\Psi_{k,i+1}w_i.
\end{equation*}
Consequently, for each $k\geq 1$, the set $\mc Y_k$ defined
in~\eqref{eq:Y_k} admits the equivalent representation
\begin{equation}
\label{eq:comp:Y_k}
    \mc Y_k
    =
    \bigcap_{\boldsymbol{\Psi}_k\in\mathfrak B_k(\mathscr A)}
    \Psi_{k,0}^{-1}
    \left(
    \mc X-\sum_{i=0}^{k-1}\Psi_{k,i+1}\mc W
    \right).
\end{equation}
The set $\mc Y_k$ imposes the
state constraint only at time $k$, whereas $\mc X_k$ imposes the state
constraint at every time from $0$ to $k$. Hence, by
\eqref{eq:comp:xk_finite_horizon} and \eqref{eq:Y_k}, we have
\begin{equation}
\label{eq:finite:XkYk}
    \mc X_k = \bigcap_{j=0}^{k}\mc Y_j,\qquad k\in\N.
\end{equation}
The identity~\eqref{eq:finite:XkYk} gives an incremental representation
of the set iteration: passing from $\mc X_k$ to $\mc X_{k+1}$ amounts
to imposing the additional constraint encoded by $\mc Y_{k+1}$.
For each $k\in\N$, define the intermediate predecessor set
\begin{equation}
\label{eq:intermediate_predecessor}
    \overline{\mc X}_{k+1}
    :=
    \bigcap_{A\in\mathscr A}
    A^{-1}(\mc X_k-\mc W).
\end{equation}
The set $\overline{\mc X}_{k+1}$ contains all states from which every
admissible one-step successor belongs to $\mc X_k$, without imposing
the additional intersection with the state constraint $\mc X$.
Together with the decreasing property of
$\{\mc X_k\}_{k\in\N}$, these observations lead to the following
equivalent forms of the set iteration.

\begin{proposition}
\label{prop:equivalent_iterations}
    Starting from $\mc X_0=\mc X$, the following three set-iteration
    schemes generate the same sequence $\{\mc X_k\}_{k\in\N}$:
    \begin{enumerate}
    \renewcommand{\labelenumi}{(\roman{enumi})}
        \item \emph{Standard set iteration:}
        \begin{equation}
        \label{eq:equiv_iteration_1}
            \mc X_{k+1}
            =
            \overline{\mc X}_{k+1}
            \bigcap\mc X,\quad k\in\N.
        \end{equation}

        \item \emph{Self-restricted set iteration:}
        \begin{equation}
        \label{eq:equiv_iteration_2}
            \mc X_{k+1}
            =
            \overline{\mc X}_{k+1}
            \bigcap\mc X_k,\quad k\in\N.
        \end{equation}

        \item \emph{Incremental set iteration:}
        \begin{equation}
        \label{eq:equiv_iteration_3}
            \mc X_{k+1}
            =
            \mc X_k\bigcap\mc Y_{k+1},\quad k\in\N.
        \end{equation}
    \end{enumerate}
\end{proposition}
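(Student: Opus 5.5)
(i) is immediate: by \eqref{eq:predecessorb} and \eqref{eq:intermediate_predecessor}, $\mc F^{-1}(\mc X_k)=\overline{\mc X}_{k+1}\cap\mc X$, which is the defining recursion \eqref{eq:comp:iteration}. Parts (ii) and (iii) will be derived from (i) together with two facts already available: the sequence is decreasing (Theorem~\ref{thm:pre-fixed-iteration}(i) with $\mc P=\mc X$, which is a pre-fixed point containing $\mc M$), and the incremental identity \eqref{eq:finite:XkYk}.

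For (ii), let $\{\mc X_k\}$ be the standard sequence. Since $\mc X_{k+1}\subseteq\mc X_k\subseteq\mc X$, we get
\[
\overline{\mc X}_{k+1}\cap\mc X_k\subseteq\overline{\mc X}_{k+1}\cap\mc X=\mc X_{k+1}=\mc X_{k+1}\cap\mc X_k\subseteq\overline{\mc X}_{k+1}\cap\mc X_k ,
\]
so the two sides coincide. The statement says both schemes start from $\mc X_0=\mc X$ and produce the same sequence. I will therefore argue by induction on $k$: if the self-restricted iterate at step $k$ equals the standard $\mc X_k$, then both schemes apply the same $\overline{\mc X}_{k+1}$ (it depends only on the current iterate), and the displayed identity shows the next iterates agree.

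For (iii), \eqref{eq:finite:XkYk} gives directly $\mc X_{k+1}=\bigcap_{j=0}^{k+1}\mc Y_j=\mc X_k\cap\mc Y_{k+1}$, and the same induction yields equality of the sequences.

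The main obstacle is justifying \eqref{eq:finite:XkYk} itself, since it was stated from the trajectory characterizations \eqref{eq:comp:xk_finite_horizon} and \eqref{eq:Y_k} without a separate argument. I would verify \eqref{eq:comp:xk_finite_horizon} by induction on $k$ from \eqref{eq:comp:iteration}. The condition $x_0\in\mc F^{-1}(\mc X_k)$ means $x_0\in\mc X$ and every successor $x_1=A_0x_0+w_0$ lies in $\mc X_k$. By the induction hypothesis, $x_1\in\mc X_k$ means that every admissible $k$-step continuation from $x_1$ stays in $\mc X$. Because the pairs $(A_j,w_j)$ may be chosen independently at each step, quantifying over $(A_0,w_0)$ and then over the continuation is the same as quantifying over all $(k+1)$-step sequences. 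With this, \eqref{eq:finite:XkYk} follows because the requirement ``$x_j\in\mc X$ for all $j\le k$ for all sequences'' splits as the intersection over $j$ of the requirements ``$x_j\in\mc X$ for all sequences''. This splitting uses the fact that $x_j$ depends only on the first $j$ matrices and disturbances, so a universal quantifier over length-$k$ sequences reduces to one over length-$j$ sequences, which is exactly $\mc Y_j$.
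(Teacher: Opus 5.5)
Your proof is correct and follows the paper's route: the decreasing property from Theorem~\ref{thm:pre-fixed-iteration} with $\mc P=\mc X$ makes intersecting with $\mc X$ and with $\mc X_k$ interchangeable, and \eqref{eq:finite:XkYk} gives the incremental form directly. Your explicit induction on the iterates and your separate verification of \eqref{eq:finite:XkYk} go beyond the paper, which takes \eqref{eq:finite:XkYk} as evident from the trajectory characterizations; in that verification, extending length-$j$ sequences to length-$k$ ones relies on the standing assumption that $\mathscr A$ and $\mc W$ are nonempty.
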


\begin{proof}
    By Theorem~\ref{thm:pre-fixed-iteration} with $\mc P=\mc X$, the
    standard iteration generates a decreasing sequence. Hence, it holds that
    $\mc X_{k+1}\subseteq\mc X_k\subseteq\mc X.$
    Using
    $\mc X_{k+1}=\overline{\mc X}_{k+1}\bigcap\mc X$, one obtains
    \[
        \overline{\mc X}_{k+1}\bigcap\mc X
        =
        \left(
            \overline{\mc X}_{k+1}\bigcap\mc X
        \right)\bigcap\mc X_k
        =
        \overline{\mc X}_{k+1}\bigcap\mc X_k.
    \]
    Thus, the standard and self-restricted iterations generate the same
    update.
    Moreover, by \eqref{eq:finite:XkYk},
    \[
        \mc X_{k+1}
        =
        \bigcap_{j=0}^{k+1}\mc Y_j
        =
        \big(\bigcap_{j=0}^{k}\mc Y_j\big)\bigcap\mc Y_{k+1}
        =
        \mc X_k\bigcap\mc Y_{k+1}.
    \]
    Thus, the incremental iteration generates the same sequence as the standard
    iteration. Consequently, the three recursions
    \eqref{eq:equiv_iteration_1}--\eqref{eq:equiv_iteration_3} are equivalent.
\end{proof}

The following corollary summarizes the principal properties of the
common sequence generated by the three equivalent recursions.
\begin{corollary}
\label{cor:MRPI}
    Consider the sequence $\{\mc X_k\}_{k\in\N}$ generated by any one
    of the equivalent iterations
    \eqref{eq:equiv_iteration_1}--\eqref{eq:equiv_iteration_3}, with
    $\mc X_0=\mc X$. Then:
    \begin{enumerate}
    \renewcommand{\labelenumi}{(\roman{enumi})}
        \item the sequence is decreasing and contains $\mc M$, namely,
        \begin{equation}
        \label{eq:Xk_monotonicity}
            \mc M
            \subseteq
            \mc X_{k+1}
            \subseteq
            \mc X_k,
            \qquad k\in\N;
        \end{equation}

        \item its intersection equals the maximal robust positively
        invariant set:
        \begin{equation}
        \label{eq:MRPI_SetIteration}
            \mc M
            =
            \mc X_\infty
            :=
            \bigcap_{k\in\N}\mc X_k;
        \end{equation}

        \item if $\mc X$ is closed, $\mc X_k$ and $\mc Y_k$ are
        closed for every $k\in\N$;

        \item if $\mc X$ is convex, $\mc X_k$ and $\mc Y_k$ are
        convex for every $k\in\N$;

        \item if $\mc X=-\mc X$ and $\mc W=-\mc W$, then $\mc X_k=-\mc X_k$ and $\mc Y_k=-\mc Y_k$ for every $k\in\N$.
    \end{enumerate}
\end{corollary}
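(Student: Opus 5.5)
Items (i) and (ii) follow directly from earlier results. By Proposition~\ref{prop:equivalent_iterations}, all three recursions generate the sequence~\eqref{eq:comp:iteration}, which is the sequence~\eqref{eq:fixed_point_sequence} with $\mc P=\mc X$. As observed after Theorem~\ref{thm:pre-fixed-iteration}, $\mc X$ satisfies~\eqref{eq:prefixed_upper_bound}: $\mc M\subseteq\mc X$ and $\mc F^{-1}(\mc X)\subseteq\mc X$. Theorem~\ref{thm:pre-fixed-iteration}(i)--(ii) then gives~\eqref{eq:Xk_monotonicity} and~\eqref{eq:MRPI_SetIteration}.

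For (iii)--(v), I will first prove the stated property for each $\mc Y_k$ and then transfer it to $\mc X_k$ via~\eqref{eq:finite:XkYk}. That identity writes $\mc X_k$ as a finite intersection $\bigcap_{j=0}^k\mc Y_j$, and closedness, convexity and central symmetry are all preserved under intersection. For $k=0$ we have $\mc Y_0=\mc X$, so nothing is needed. For $k\geq1$, I will use the representation~\eqref{eq:Y_k}: $\mc Y_k=\bigcap\{x_0:\ \Psi_{k,0}x_0+c\in\mc X\}$, where the intersection runs over all admissible product tuples $\boldsymbol\Psi_k\in\mathfrak B_k(\mathscr A)$ and all disturbance vectors $c=\sum_{i=0}^{k-1}\Psi_{k,i+1}w_i$ with $w_i\in\mc W$. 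Working pointwise in $w$ in this way avoids any regularity question about the Pontryagin difference in~\eqref{eq:comp:Y_k}, since no closedness of $\mc W$ is assumed.

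Each set $\{x_0:\Psi_{k,0}x_0+c\in\mc X\}$ is the preimage of $\mc X$ under the affine map $x_0\mapsto\Psi_{k,0}x_0+c$. If $\mc X$ is closed, the preimage is closed by continuity; if $\mc X$ is convex, it is convex because affine preimages of convex sets are convex. An arbitrary intersection of such sets is again closed (respectively convex), which proves (iii) and (iv).

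For (v), take $x_0\in\mc Y_k$ and fix any admissible sequences $A_0,\ldots,A_{k-1}$ and $w_0,\ldots,w_{k-1}$. The trajectory from $-x_0$ reaches $-(\Psi_{k,0}x_0+\sum_i\Psi_{k,i+1}(-w_i))$. Since $-w_i\in\mc W$ by symmetry of $\mc W$, the bracketed point lies in $\mc X$ because $x_0\in\mc Y_k$, and then its negative lies in $\mc X$ by symmetry of $\mc X$. Hence $-x_0\in\mc Y_k$, so $-\mc Y_k\subseteq\mc Y_k$; applying this to $-\mc Y_k$ gives equality. Alternatively, one could argue inductively on $\mc X_k$ through~\eqref{eq:predecessorb}, as in the symmetry corollary.

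The only step needing any care is making sure the arguments do not rely on hidden regularity assumptions on $\mathscr A$ or $\mc W$, such as compactness. The pointwise intersection representation handles this, so I expect no real obstacle.
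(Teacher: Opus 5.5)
Your proposal is correct and follows the paper's proof essentially step by step. Items (i)--(ii) come from Theorem~\ref{thm:pre-fixed-iteration} with $\mc P=\mc X$ together with Proposition~\ref{prop:equivalent_iterations}, and (iii)--(v) are proved for each $\mc Y_k$ and then transferred to $\mc X_k=\bigcap_{j=0}^k\mc Y_j$; the paper likewise works from the trajectory form~\eqref{eq:Y_k}, as an intersection of affine preimages of $\mc X$ and via the negated-trajectory argument, rather than from the Pontryagin-difference form.
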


\begin{proof}
    Statements~(i) and~(ii) follow directly from
    Theorem~\ref{thm:pre-fixed-iteration} with $\mc P=\mc X$ and
    Proposition~\ref{prop:equivalent_iterations}.

    Suppose that $\mc X$ is closed. For every fixed admissible matrix and 
    disturbance sequence, the state
    reached after $k$ steps is a continuous affine function of the initial
    state $x_0$. By~\eqref{eq:Y_k}, the set
    $\mc Y_k$ is an intersection of preimages of $\mc X$ under such
    mappings and is therefore closed. Since $\mc Y_0=\mc X$ and
    $\mc X_k=\bigcap_{j=0}^{k}\mc Y_j,$
    the set $\mc X_k$ is also closed.

    If $\mc X$ is convex, the same argument applies because affine
    preimages and arbitrary intersections preserve convexity. Hence,
    $\mc X_k$ and $\mc Y_k$ are convex for every $k\in\N$.

    If $\mc X=-\mc X$ and $\mc W=-\mc W$, linearity of the dynamics
    implies that the negative of every trajectory generated from $x_0$
    under an admissible disturbance sequence is a trajectory generated
    from $-x_0$ under the corresponding negative disturbance sequence.
    Hence, $\mc Y_k=-\mc Y_k$ for every $k\in\N$. Since
    $\mc X_k=\bigcap_{j=0}^k\mc Y_j$, it follows that
    $\mc X_k=-\mc X_k$.
\end{proof}

\subsection{Exact Stopping Tests for Finite Determination}
The decreasing sequence $\{\mc X_k\}_{k\in\mathbb N}$ need not become
stationary after finitely many iterations. When it does, $\mc M$ is
obtained exactly from a finite iterate. 
\begin{definition}
    The maximal robust positively invariant set $\mc M$ is said to be
    finitely determined if there exists an index $k\in\N$ such that
    $\mc M=\mc X_k.$
    The smallest index for which $\mc M=\mc X_k$ holds is called the 
    determination index.
\end{definition}

The next result gives necessary and sufficient conditions under which
$\mc M$ is finitely determined by any one of the equivalent iterations
\eqref{eq:equiv_iteration_1}--\eqref{eq:equiv_iteration_3}.

\begin{theorem}
\label{thm:finite_determination}
    Consider the sequence $\{\mc X_k\}_{k\in\N}$ generated by any one
    of the equivalent iterations
    \eqref{eq:equiv_iteration_1}--\eqref{eq:equiv_iteration_3}, with
    $\mc X_0=\mc X$. 
    Consider the sequence $\{\overline{\mc X}_{k}\}_{k\ge1}$ defined 
    in~\eqref{eq:intermediate_predecessor}.
    Consider also the sequence
    $\{\mc Y_k\}_{k\in\N}$ defined by $\mc Y_0=\mc X$
    and~\eqref{eq:comp:Y_k}. 
    Then the following statements are equivalent:
    \begin{enumerate}
    \renewcommand{\labelenumi}{(\roman{enumi})}
        \item The maximal robust positively invariant set $\mc M$ is
        finitely determined.

        \item There exists $k\in\N$ such that
        \begin{equation}
        \label{eq:finite:stationarity_condition}
            \mc X_k\subseteq\mc X_{k+1}.
        \end{equation}

        \item There exists $k\in\N$ such that
        \begin{equation}
        \label{eq:finite:stationarity_condition_}
            \mc X_k\subseteq\overline{\mc X}_{k+1}.
        \end{equation}

        \item There exists $k\in\N$ such that
        \begin{equation}
        \label{eq:finite:incremental_condition}
            \mc X_k\subseteq\mc Y_{k+1}.
        \end{equation}
    \end{enumerate}
    Moreover, if any one of
    \eqref{eq:finite:stationarity_condition},
    \eqref{eq:finite:stationarity_condition_}, or
    \eqref{eq:finite:incremental_condition}
    holds for some $k\in\N$, then
    $\mc M=\mc X_\infty=\mc X_k=\mc X_{k+1}.$
\end{theorem}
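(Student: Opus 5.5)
The plan is to show that (ii), (iii), and (iv) are each equivalent to the stationarity $\mc X_k=\mc X_{k+1}$ at the same index $k$. Then I would show that stationarity at some $k$ is equivalent to finite determination, and along the way obtain $\mc M=\mc X_\infty=\mc X_k=\mc X_{k+1}$.

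\emph{Step 1: (ii), (iii), (iv) at a common index.} By Corollary~\ref{cor:MRPI}(i), $\mc X_{k+1}\subseteq\mc X_k$ always holds, so \eqref{eq:finite:stationarity_condition} is equivalent to $\mc X_k=\mc X_{k+1}$.

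For (iii), use the self-restricted form \eqref{eq:equiv_iteration_2} of Proposition~\ref{prop:equivalent_iterations}, $\mc X_{k+1}=\overline{\mc X}_{k+1}\cap\mc X_k$. Then $\mc X_k\subseteq\overline{\mc X}_{k+1}$ holds if and only if $\overline{\mc X}_{k+1}\cap\mc X_k=\mc X_k$, i.e., $\mc X_{k+1}=\mc X_k$.

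For (iv), argue in the same way with the incremental form \eqref{eq:equiv_iteration_3}: $\mc X_{k+1}=\mc X_k\cap\mc Y_{k+1}$ equals $\mc X_k$ if and only if $\mc X_k\subseteq\mc Y_{k+1}$.

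Hence, for each fixed $k$, the three conditions \eqref{eq:finite:stationarity_condition}, \eqref{eq:finite:stationarity_condition_}, \eqref{eq:finite:incremental_condition} are all equivalent to $\mc X_k=\mc X_{k+1}$.

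\emph{Step 2: stationarity implies finite determination.} Suppose $\mc X_k=\mc X_{k+1}$. Then $\mc F^{-1}(\mc X_k)=\mc X_k$, so $\mc X_k$ is a fixed point of $\mc F^{-1}$. Induction gives $\mc X_j=\mc X_k$ for all $j\ge k$, and since the sequence is decreasing, $\mc X_\infty=\bigcap_j\mc X_j=\mc X_k$. By \eqref{eq:MRPI_SetIteration} this yields $\mc M=\mc X_\infty=\mc X_k=\mc X_{k+1}$, which proves (ii)$\Rightarrow$(i) together with the final claim.

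An alternative route for Step 2 avoids the limit altogether. Since $\mc X_k$ is a fixed point, $\mc X_k\subseteq\mc M$ because $\mc M$ is the greatest fixed point (Theorem~\ref{thm:fixedpoint}). Combined with $\mc M\subseteq\mc X_k$ from Corollary~\ref{cor:MRPI}(i), this gives $\mc M=\mc X_k$.

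\emph{Step 3: finite determination implies stationarity.} Suppose $\mc M=\mc X_k$ for some $k$. Then $\mc X_{k+1}=\mc F^{-1}(\mc X_k)=\mc F^{-1}(\mc M)=\mc M=\mc X_k$ by \eqref{eq:M_fixedpoint}, so (ii) holds at the same index.

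There is no real obstacle here; the argument is bookkeeping. The one point needing care is that each equivalence in Step 1 is taken at the same index $k$, so that the "moreover" conclusion holds for whichever of the three conditions is verified. The set-inclusion manipulations in Step 1 rely only on the decreasing property, which is already established.
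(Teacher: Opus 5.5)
Your proposal is correct and takes essentially the same approach as the paper's appendix proof: (i)$\Leftrightarrow$(ii) through the fixed-point identity $\mc M=\mc F^{-1}(\mc M)$ and the greatest-fixed-point argument (your ``alternative route'' is exactly the paper's), and (iii) and (iv) reduced to stationarity at the same index $k$ through the equivalent recursions. The differences are cosmetic: for (iii) you use the self-restricted form $\mc X_{k+1}=\overline{\mc X}_{k+1}\cap\mc X_k$ where the paper uses the standard form $\mc X_{k+1}=\overline{\mc X}_{k+1}\cap\mc X$ with $\mc X_k\subseteq\mc X$, and your primary route for (ii)$\Rightarrow$(i) goes through $\mc X_\infty$, which is valid but relies on the heavier limit characterization.
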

\begin{proof}
    The proof can be found in Appendix~\ref{app:finite_determination}.
\end{proof}

The inclusions
\eqref{eq:finite:stationarity_condition}--\eqref{eq:finite:incremental_condition}
provide three equivalent exact stopping tests for finite determination.
Condition~\eqref{eq:finite:stationarity_condition} compares two
consecutive iterates. Since the sequence is decreasing, this condition
is equivalent to
$ \mc X_k=\mc X_{k+1}.$
Condition~\eqref{eq:finite:stationarity_condition_} checks whether
$\mc X_k$ is robust positively invariant, since
$\mc X_k\subseteq\overline{\mc X}_{k+1}$ is equivalent to
$\mc F(\mc X_k)\subseteq\mc X_k$. This test can be performed before
the auxiliary intersection used to construct $\mc X_{k+1}$. Finally,
condition~\eqref{eq:finite:incremental_condition} tests whether the
additional constraint represented by $\mc Y_{k+1}$ is redundant on the
current iterate $\mc X_k$.


\subsection{Conditions for Nonemptiness and Finite Determination}
The existence of the maximal robust positively invariant set does not
preclude $\mc M=\emptyset$. This subsection establishes conditions
for its nonemptiness and finite determination.

We first recall the stability notion used below; see, e.g.,
\cite{Shorten2007Stability,Wirth2002JSR}. The matrix family
$\mathscr A$ is said to be uniformly exponentially stable if there
exist constants $c\geq1$ and $\rho\in[0,1)$ such that
\begin{equation*}
    \|A_{k-1}\cdots A_1A_0\|
    \leq
    c\rho^k
\end{equation*}
for every $k\geq1$ and every
$A_0,\ldots,A_{k-1}\in\mathscr A$, where $\|\cdot\|$ denotes a fixed
matrix norm induced by a vector norm. Thus, every admissible matrix 
product generated by $\mathscr A$
decays exponentially, uniformly over all admissible matrix sequences.
Equivalently, in terms of the matrix-product tuples introduced in 
\eqref{eq:Psi}, for every $k\geq1$, every
$\boldsymbol{\Psi}_k=(\Psi_{k,0},\ldots,\Psi_{k,k})\in\mathfrak B_k(\mathscr A)$, 
and every $0\leq i\leq k$, one has
$\|\Psi_{k,i}\| \leq c\rho^{k-i}.$
This condition is equivalent to uniform exponential stability of the
origin for the homogeneous linear difference inclusion
$x^+\in\{Ax:A\in\mathscr A\}$.

\begin{remark}
    For a finite matrix family, uniform exponential stability is
    equivalent to its joint spectral radius being strictly smaller than
    one~\cite{Wirth2002JSR}. In particular, if
    $\mathscr A=\operatorname{conv}\{A_1,\ldots,A_s\}$, then uniform
    exponential stability of $\mathscr A$ is equivalent to the joint
    spectral radius of the vertex family $\{\overline A_1,\ldots,\overline A_s\}$ 
    being strictly smaller than one. The existence of a common quadratic
    Lyapunov function is sufficient, but generally not necessary, for
    uniform exponential stability~\cite{Shorten2007Stability}. In \cite{DeyBhasin2024Computation}, the closed-loop matrices are
    described as pointwise Schur stable under a common feedback gain,
    whereas a common quadratic Lyapunov matrix is subsequently employed.
    The latter provides a sufficient condition for uniform exponential
    stability, while pointwise Schur stability alone does not guarantee
    uniform exponential stability under arbitrary parameter variations.
\end{remark}

For each $k\geq1$, define the $k$-step
disturbance-reachable set from the origin by
\begin{equation}
    \mc R_k
    :=
    \left\{
        \sum_{i=0}^{k-1}
        \Psi_{k,i+1}w_i:
        \boldsymbol\Psi_k\in\mathfrak B_k(\mathscr A),\
        w_0,\ldots,w_{k-1}\in\mc W
    \right\}
\end{equation}

\begin{corollary}
    The origin belongs to the maximal robust positively invariant set if
    and only if the origin satisfies the state constraint and every $k$-step
    disturbance-reachable set from the origin remains admissible with respect to the state constraint, i.e.,
    \begin{equation}
        0\in\mc M
        \quad\Longleftrightarrow\quad
        0\in\mc X
        \ \text{and}\
        \mc R_k\subseteq\mc X,
        \quad  k\geq1.
    \end{equation}
\end{corollary}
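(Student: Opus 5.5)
The plan is to combine Corollary~\ref{cor:MRPI}(ii), which gives $\mc M=\bigcap_{k\in\N}\mc X_k$, with the identity \eqref{eq:finite:XkYk}, $\mc X_k=\bigcap_{j=0}^k\mc Y_j$. Together they yield
\[
\mc M=\bigcap_{j\in\N}\mc Y_j .
\]
Hence $0\in\mc M$ holds if and only if $0\in\mc Y_j$ for every $j\in\N$. Since $\mc Y_0=\mc X$, the case $j=0$ is exactly the condition $0\in\mc X$. It therefore remains to show that, for each $k\geq1$, we have $0\in\mc Y_k$ if and only if $\mc R_k\subseteq\mc X$.

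For fixed $k\geq1$, I would use the defining trajectory description \eqref{eq:Y_k} rather than the Pontryagin-difference form \eqref{eq:comp:Y_k}. This avoids any technicalities when $\mc X-\sum_i\Psi_{k,i+1}\mc W$ is empty. Starting from $x_0=0$, the explicit state formula gives
\[
x_k=\Psi_{k,0}\cdot 0+\sum_{i=0}^{k-1}\Psi_{k,i+1}w_i=\sum_{i=0}^{k-1}\Psi_{k,i+1}w_i .
\]
As $A_0,\ldots,A_{k-1}$ range over $\mathscr A$ and $w_0,\ldots,w_{k-1}$ range over $\mc W$, the tuple $\boldsymbol\Psi_k$ ranges over all of $\mathfrak B_k(\mathscr A)$, because $\mathfrak B_k(\mathscr A)$ is by definition generated by exactly these matrix sequences. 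Consequently, the set of all admissible endpoints $x_k$ from the origin is precisely $\mc R_k$. The condition $0\in\mc Y_k$, namely $x_k\in\mc X$ for all admissible sequences, is thus equivalent to $\mc R_k\subseteq\mc X$.

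Combining the two steps gives the claimed equivalence. There is no real obstacle here. The only point requiring care is to justify that the parametrization by $\boldsymbol\Psi_k$ and by the underlying matrix sequences describes the same set of reachable points, so that no direction of the equivalence loses generality. This follows immediately from how $\mathfrak B_k(\mathscr A)$ was defined.
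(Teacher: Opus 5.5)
Your proof is correct and follows essentially the same route as the paper. The paper evaluates the finite-horizon trajectory characterization \eqref{eq:comp:xk_finite_horizon} at $x_0=0$ and combines it with $\mc M=\bigcap_{k\in\N}\mc X_k$; your detour through $\mc X_k=\bigcap_{j=0}^{k}\mc Y_j$ only repackages that same characterization one time step at a time.
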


\begin{proof}
    By the characterization~\eqref{eq:comp:xk_finite_horizon},
    $0\in\mc X_k$ if and only if
    $0\in\mc X$ and
    $\mc R_j\subseteq\mc X$ for every $j=1,\ldots,k$.
    Since
    $\mc M=\bigcap_{k\in\N}\mc X_k$,
    the result follows.
\end{proof}

Define the limiting disturbance-reachable set by
\begin{equation}
\label{eq:nonempty:R}
    \mc R
    :=
    \bigcap_{\ell\geq 1}
    \operatorname{cl}\left(
    \bigcup_{k\geq \ell}\mc R_k
    \right).
\end{equation}
Equivalently, $\mc R$ is the Kuratowski upper limit of the
sequence $\{\mc R_k\}_{k\geq 1}$.

\begin{remark}
    If $0\in\mc W$, then
    $\mc R_k\subseteq\mc R_{k+1}$ for every $k\geq1$. Indeed, any
    element of $\mc R_k$ can be realized in $\mc R_{k+1}$ by prepending
    a zero disturbance and shifting the original matrix and disturbance
    sequences by one time step. Hence, in this case,
    \eqref{eq:nonempty:R} reduces to 
    $\mc R = \operatorname{cl} \left( \bigcup_{k\geq1}\mc R_k \right).$
\end{remark}

The following proposition establishes the fundamental properties of
the set $\mc R$. 

\begin{proposition}
    \label{prop:minimal_rpi}
    Suppose that the nonempty matrix family $\mathscr A$ is uniformly
    exponentially stable and that $\mc W$ is nonempty and bounded.
    Then the set $\mc R$ defined in~\eqref{eq:nonempty:R} is nonempty
    and compact and satisfies $\mc F(\mc R)\subseteq\mc R.$
    Moreover, $\mc R$ is contained in every nonempty closed set
    $\mc S\subseteq\R^n$ satisfying
    $\mc F(\mc S)\subseteq\mc S$.
\end{proposition}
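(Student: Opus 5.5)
Throughout, let $\beta:=\sup_{w\in\mc W}\|w\|<\infty$, where $\|\cdot\|$ is the vector norm inducing the matrix norm in the definition of uniform exponential stability. The plan is to handle the four claims in turn. Boundedness and nonemptiness come directly from the uniform decay bound. Invariance and minimality come from a tail argument: a point reached after many steps from an arbitrary initial condition is, up to an exponentially small error, the same as a point reached from the origin.

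\emph{Boundedness and nonemptiness.} For every $k\geq1$ and every element $\sum_{i=0}^{k-1}\Psi_{k,i+1}w_i$ of $\mc R_k$, the bound $\|\Psi_{k,i+1}\|\le c\rho^{k-i-1}$ gives
\[
\Bigl\|\sum_{i=0}^{k-1}\Psi_{k,i+1}w_i\Bigr\|\le c\beta\sum_{j\ge0}\rho^j=\frac{c\beta}{1-\rho}=:r .
\]
Hence every $\mc R_k$ lies in the closed ball $B_r$. Each set $C_\ell:=\operatorname{cl}\bigl(\bigcup_{k\ge\ell}\mc R_k\bigr)$ is then a nonempty compact subset of $B_r$, since $\mc W\neq\emptyset$ and $\mathscr A\neq\emptyset$ make each $\mc R_k$ nonempty. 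The sets $C_\ell$ are decreasing in $\ell$. By Cantor's intersection theorem, $\mc R=\bigcap_\ell C_\ell$ is nonempty and compact.

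\emph{Invariance $\mc F(\mc R)\subseteq\mc R$.} I will use the sequential description of the upper limit: $z\in\mc R$ if and only if there exist indices $k_j\to\infty$ and points $z_j\in\mc R_{k_j}$ with $z_j\to z$. Take $z\in\mc R$, $A\in\mathscr A$ and $w\in\mc W$. Each $z_j$ is generated by some matrix sequence $A_0,\dots,A_{k_j-1}$ and disturbances $w_0,\dots,w_{k_j-1}$. Append $A_{k_j}:=A$ and $w_{k_j}:=w$. Then $Az_j+w\in\mc R_{k_j+1}$, and $Az_j+w\to Az+w$, so $Az+w\in\mc R$.

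\emph{Minimality.} Let $\mc S$ be nonempty and closed with $\mc F(\mc S)\subseteq\mc S$, and fix some $s\in\mc S$. Take $z\in\mc R$ with a sequence $z_j\in\mc R_{k_j}$, $z_j\to z$, and let $\boldsymbol\Psi_{k_j}$ and $w_i$ be the matrix products and disturbances realizing $z_j$. Running the same matrix and disturbance sequences from $s$ produces, by invariance of $\mc S$ applied $k_j$ times, the point $\Psi_{k_j,0}s+z_j\in\mc S$. Uniform exponential stability gives $\|\Psi_{k_j,0}s\|\le c\rho^{k_j}\|s\|\to0$. Therefore $\Psi_{k_j,0}s+z_j\to z$, and closedness of $\mc S$ yields $z\in\mc S$.

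The argument is elementary once the sequential characterization of the Kuratowski upper limit is in hand. That characterization is the only point needing care, particularly for $\rho=0$ or when $\mc R_k$ is not monotone. I would verify it briefly: $z\in C_\ell$ for all $\ell$ means every neighborhood of $z$ meets $\mc R_k$ for some $k\ge\ell$. A diagonal choice then produces $k_j\to\infty$ and $z_j\in\mc R_{k_j}$ with $\|z_j-z\|<1/j$. Conversely, such sequences clearly give points in every $C_\ell$.
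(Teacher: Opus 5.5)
Your proposal is correct and follows essentially the same route as the paper's proof. It uses the uniform bound $c\beta/(1-\rho)$ on every $\mathcal R_k$ together with Cantor's intersection theorem for nonemptiness and compactness. It uses the append-one-step argument via the sequential description of the upper limit (indices $k_j\to\infty$, points $z_j\to z$) for invariance, and it runs the same sequences from a point of $\mathcal S$, using $\|\Psi_{k_j,0}s\|\le c\rho^{k_j}\|s\|\to0$ and closedness of $\mathcal S$, for minimality.
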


\begin{proof}
    The proof can be found in Appendix~\ref{app:minimal_rpi}.
\end{proof}

Proposition~\ref{prop:minimal_rpi} therefore identifies $\mc R$ as the
least, with respect to set inclusion, nonempty closed robust positively
invariant set for the dynamics $x^+\in\mc F(x)$.
Following standard set invariance terminology
\cite{BlanchiniMiani2015}, we also refer to $\mc R$ as the minimal closed
robust positively invariant set. Related existence and outer-approximation results for the minimal
robust positively invariant set of polytopic linear difference
inclusions were established in
\cite{KouramasEtAl2005MinimalRPI} under common quadratic stability and
compact convex disturbance sets containing the origin in their
interior.  Proposition~\ref{prop:minimal_rpi} establishes the
corresponding minimality property for general uniformly exponentially
stable matrix families and nonempty bounded disturbance sets. This
property yields the following necessary and sufficient condition for
nonemptiness of $\mc M$. 

\begin{theorem}
\label{thm:nonempty}
    Suppose that the nonempty matrix family $\mathscr A$ is uniformly
    exponentially stable and that $\mc W$ is nonempty and bounded. Let
    $\mc X\subseteq\R^n$ be closed. Then the maximal robust positively
    invariant set $\mc M$ is nonempty if and only if
    \begin{equation}
    \label{eq:nonempty_condition}
        \mc R\subseteq\mc X.
    \end{equation}
\end{theorem}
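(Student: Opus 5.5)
The plan is to prove the two directions separately, using Proposition~\ref{prop:minimal_rpi} for necessity and Theorem~\ref{thm:fixedpoint} for sufficiency. The closedness of $\mc X$ enters only through Corollary~\ref{cor:closed_M}.

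\emph{Sufficiency.} Assume $\mc R\subseteq\mc X$. By Proposition~\ref{prop:minimal_rpi}, $\mc R$ is nonempty and satisfies $\mc F(\mc R)\subseteq\mc R$. Combined with the assumption, this gives the chained inclusions \eqref{eq:rpi_chained_inclusions} for $\mc S=\mc R$, so $\mc R$ is robust positively invariant. Equivalently, $\mc R$ is a post-fixed point of $\mc F^{-1}$ contained in $\mc X$. By \eqref{eq:gfp_postfixed} in Theorem~\ref{thm:fixedpoint}, we get $\mc R\subseteq\mc M$. Since $\mc R\neq\emptyset$, it follows that $\mc M\neq\emptyset$. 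This direction needs neither closedness of $\mc X$ nor anything beyond the proposition.

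\emph{Necessity.} Assume $\mc M\neq\emptyset$. Since $\mc X$ is closed, Corollary~\ref{cor:closed_M} shows that $\mc M$ is closed. By Theorem~\ref{thm:fixedpoint}, $\mc M$ is robust positively invariant, so $\mc F(\mc M)\subseteq\mc M$. Thus $\mc M$ is a nonempty closed set satisfying $\mc F(\mc M)\subseteq\mc M$, and the minimality part of Proposition~\ref{prop:minimal_rpi} gives $\mc R\subseteq\mc M\subseteq\mc X$, which is \eqref{eq:nonempty_condition}.

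The main obstacle is essentially hidden inside Proposition~\ref{prop:minimal_rpi}, which we are allowed to assume. The only delicate point at this level is remembering that the minimality statement applies only to \emph{closed} invariant sets. This is exactly why the hypothesis that $\mc X$ is closed is needed: via Corollary~\ref{cor:closed_M}, it makes $\mc M$ eligible for that comparison. Alternatively, for a nonempty but possibly non-closed $\mc M$, one could apply the closure argument from that corollary's proof to $\operatorname{cl}(\mc M)$, but closedness of $\mc X$ is still required to keep $\operatorname{cl}(\mc M)\subseteq\mc X$.
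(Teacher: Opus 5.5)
Your proof is correct and follows essentially the same route as the paper. Sufficiency uses Proposition~\ref{prop:minimal_rpi} to show that $\mc R$ is a nonempty robust positively invariant subset of $\mc X$, hence contained in $\mc M$; necessity uses Corollary~\ref{cor:closed_M} to make $\mc M$ closed, so the minimality part of Proposition~\ref{prop:minimal_rpi} gives $\mc R\subseteq\mc M\subseteq\mc X$.
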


\begin{proof}
Suppose first that $\mc R\subseteq\mc X$. By
Proposition~\ref{prop:minimal_rpi}, the set $\mc R$ is nonempty and
satisfies $\mc F(\mc R)\subseteq\mc R.$
Hence, $\mc R$ is a nonempty robust positively invariant subset of
$\mc X$, and therefore $\mc M$ is nonempty.
Conversely, suppose that $\mc M$ is nonempty. Since $\mc X$ is closed,
Corollary~\ref{cor:closed_M} implies that $\mc M$ is closed. Moreover,
$\mc F(\mc M)\subseteq\mc M$. The minimality property in
Proposition~\ref{prop:minimal_rpi} therefore gives
$\mc R\subseteq\mc M\subseteq\mc X.$
\end{proof}

Theorem~\ref{thm:nonempty} shows that nonemptiness of $\mc M$ is
equivalent to admissibility of the minimal closed robust positively
invariant set $\mc R$ with respect to the state constraint $\mc X$.
Building on the minimal robust positively invariant set results for
polytopic linear difference inclusions in
\cite{KouramasEtAl2005MinimalRPI},
\cite{DeyBhasin2024Computation} derives a related criterion under
polytopic uncertainty and polyhedral state constraints by
testing containment of the convex hull of the minimal set. The present
result instead allows an arbitrary closed state constraint set and is
stated directly in terms of $\mc R$.
Although condition~\eqref{eq:nonempty_condition} is exact, it may be
difficult to verify directly because $\mc R$ is defined through the
asymptotic behavior of the disturbance reachable sets
$\{\mc R_k\}_{k\geq 1}$. The following corollary provides a simpler
sufficient condition based on a common contractive set.

\begin{corollary}
\label{cor:nonempty_contraction}
    Let $\mc L\subseteq\R^n$ be a nonempty closed convex set containing
    the origin. Suppose that there exist $\lambda\in[0,1)$ and
    $\nu\geq0$ such that
    \begin{equation*}
        A\mc L\subseteq\lambda\mc L
        \quad\text{for every }A\in\mathscr A,
        \qquad
        \mc W\subseteq\nu\mc L.
    \end{equation*}
    If $\nu/(1-\lambda)\mc L\subseteq\mc X$, then
    $\nu/(1-\lambda)\mc L$ is robust positively invariant and satisfies
    \[
        \mc R
        \subseteq
        \nu/(1-\lambda)\mc L
        \subseteq
        \mc M
        \subseteq
        \mc X.
    \]
    Consequently, $\mc M$ is nonempty.
\end{corollary}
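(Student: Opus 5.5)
Let $\mc S:=\frac{\nu}{1-\lambda}\mc L$. The plan is to show directly that $\mc S$ is robust positively invariant. The remaining inclusions then follow from results already established: maximality gives $\mc S\subseteq\mc M$, and Proposition~\ref{prop:minimal_rpi} gives $\mc R\subseteq\mc S$.

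\emph{Step 1: robust positive invariance of $\mc S$.} Fix $x\in\mc S$, $A\in\mathscr A$ and $w\in\mc W$. Write $x=\frac{\nu}{1-\lambda}\ell$ with $\ell\in\mc L$. The hypothesis $A\mc L\subseteq\lambda\mc L$ gives
\[
Ax\in\frac{\nu\lambda}{1-\lambda}\mc L ,
\]
and $w\in\nu\mc L$. Since $\mc L$ is convex, for nonnegative scalars $a,b$ one has $a\mc L+b\mc L=(a+b)\mc L$. This also covers the cases $a=0$ or $b=0$, because $0\in\mc L$ and $0\cdot\mc L=\{0\}$ when $\mc L$ is nonempty. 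Hence
\[
Ax+w\in\Bigl(\frac{\nu\lambda}{1-\lambda}+\nu\Bigr)\mc L=\frac{\nu}{1-\lambda}\mc L=\mc S .
\]
Together with the assumption $\mc S\subseteq\mc X$, this verifies Definition~\ref{def:rpi}. Theorem~\ref{thm:fixedpoint} then yields $\mc S\subseteq\mc M\subseteq\mc X$. Since $0\in\mc L$, we have $0\in\mc S$, so $\mc M\neq\emptyset$; this part needs no stability assumption.

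\emph{Step 2: the inclusion $\mc R\subseteq\mc S$.} The set $\mc S$ is nonempty and closed, being a nonnegative scaling of a closed set (or $\{0\}$ when $\nu=0$), and it satisfies $\mc F(\mc S)\subseteq\mc S$. To apply the minimality part of Proposition~\ref{prop:minimal_rpi}, I would check its hypotheses, since the corollary does not list them explicitly.

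For uniform exponential stability, take the gauge of $\mc L$ as a seminorm-like function. The alternative I would try first is to avoid Proposition~\ref{prop:minimal_rpi} altogether and bound $\mc R_k$ directly. By induction using the same convexity identity, $\Psi_{k,i+1}w_i\in\lambda^{k-1-i}\nu\mc L$, so
\[
\mc R_k\subseteq \nu\sum_{j=0}^{k-1}\lambda^j\mc L\subseteq\mc S,
\]
where the last inclusion uses that $t\mc L\subseteq s\mc L$ for $0\le t\le s$, by convexity and $0\in\mc L$. Consequently $\bigcup_{k\ge\ell}\mc R_k\subseteq\mc S$, and closedness of $\mc S$ gives $\mc R\subseteq\mc S$ directly from \eqref{eq:nonempty:R}.

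\emph{Main obstacle.} This direct route sidesteps the issue that $\mc L$ might be unbounded, in which case $\mathscr A$ need not be uniformly exponentially stable and $\mc W$ need not be bounded, so Proposition~\ref{prop:minimal_rpi} might not apply. The only delicate points are the convex-set scaling identities, including the degenerate cases $\nu=0$ and $\lambda=0$. I would state these once as a small lemma and reuse it.
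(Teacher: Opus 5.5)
Your proof is correct and follows the paper's argument: you verify $\mc F(\mc S)\subseteq\mc S$ for $\mc S=\tfrac{\nu}{1-\lambda}\mc L$ via the convex scaling identity, use maximality to get $\mc S\subseteq\mc M\subseteq\mc X$, and then bound $\mc R_k\subseteq\nu\bigl(\sum_{j=0}^{k-1}\lambda^j\bigr)\mc L\subseteq\mc S$, passing to $\mc R$ through the closedness of $\mc S$. This direct bound is exactly what the paper uses. You can therefore drop the opening appeal to Proposition~\ref{prop:minimal_rpi} and the unfinished remark about the gauge of $\mc L$: they are not needed, and, as you yourself note, the corollary's hypotheses do not justify them.
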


\begin{proof}
    Set $\gamma:=\nu/(1-\lambda)$. By the convexity of $\mc L$ and the
    fact that $0\in\mc L$, we have 
    $\mc F(\gamma\mc L) \subseteq \lambda\gamma\mc L+\nu\mc L
    = (\lambda\gamma+\nu)\mc L = \gamma\mc L.$
    Together with the assumed inclusion
    $\gamma\mc L\subseteq\mc X$, this gives
    $\mc F(\gamma\mc L) \subseteq \gamma\mc L \subseteq \mc X.$
    Hence, $\gamma\mc L$ is robust positively invariant. By the
    maximality of $\mc M$, we have 
    $\gamma\mc L\subseteq\mc M\subseteq\mc X.$
    Moreover, repeated application of
    $A\mc L\subseteq\lambda\mc L$ for every $A\in\mathscr A$ and
    $\mc W\subseteq\nu\mc L$ gives, for every $k\geq1$,
    $\mc R_k \subseteq \nu\left(\sum_{j=0}^{k-1}\lambda^j\right)\mc L
    \subseteq \gamma\mc L.$
    Since $\gamma\mc L$ is closed, the definition of $\mc R$ yields
    $\mc R\subseteq\gamma\mc L$. Therefore, we have
    $\mc R \subseteq \nu/(1-\lambda)\mc L \subseteq \mc M \subseteq \mc X.$
    In particular, $\mc M$ is nonempty.
\end{proof}

The exact nonemptiness condition $\mc R\subseteq\mc X$ does not, by
itself, guarantee finite determination. A stronger interior-containment
condition yields finite determination provided that at least one
iterate is bounded. 

\begin{theorem}
\label{thm:finite_strict_nonempty}
    Suppose that the nonempty matrix family $\mathscr A$ is uniformly
    exponentially stable and that $\mc W$ is nonempty and bounded.
    Consider the sequence $\{\mc X_k\}_{k\in\N}$ generated by any one
    of the equivalent iterations
    \eqref{eq:equiv_iteration_1}--\eqref{eq:equiv_iteration_3}, with
    $\mc X_0=\mc X$. Suppose that
    $\mc R\subseteq\operatorname{int}(\mc X)$
    and that $\mc X_{\bar k}$ is bounded for some $\bar k\in\N$.
    Then the maximal robust positively invariant set $\mc M$ is
    nonempty and finitely determined. More precisely, there exists an
    integer $q\geq\bar k+1$ such that
    $\mc M=\mc X_{q-1}.$
\end{theorem}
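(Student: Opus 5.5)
Since $\mc R\subseteq\operatorname{int}(\mc X)\subseteq\mc X$ and $\mc X$ may be taken closed (or we argue directly), nonemptiness follows from Proposition~\ref{prop:minimal_rpi}: $\mc R$ is a nonempty robust positively invariant subset of $\mc X$, hence $\mc R\subseteq\mc M$. For finite determination, the plan is to show that for $k$ large the incremental constraint $\mc Y_{k+1}$ is redundant on the bounded iterate $\mc X_{k}$. Then Theorem~\ref{thm:finite_determination}(iv) gives $\mc M=\mc X_k$.

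\textbf{Step 1: a uniform margin.} Since $\mc R$ is compact and contained in the open set $\operatorname{int}(\mc X)$, there is $\varepsilon>0$ with $\mc R+\varepsilon\mc B\subseteq\mc X$, where $\mc B$ is the closed unit ball of the chosen vector norm. Next, I claim there is $\ell$ such that $\mc R_k\subseteq\mc R+\tfrac{\varepsilon}{2}\mc B$ for all $k\geq\ell$. Suppose not. Then there are $k_j\to\infty$ and $r_j\in\mc R_{k_j}$ with $\operatorname{dist}(r_j,\mc R)\geq\varepsilon/2$. The sets $\mc R_k$ are uniformly bounded, by $c\,\sup_{w\in\mc W}\|w\|/(1-\rho)$ under uniform exponential stability, so a subsequence converges to some $r$. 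By the Kuratowski upper-limit definition~\eqref{eq:nonempty:R}, $r\in\mc R$, which contradicts $\operatorname{dist}(r,\mc R)\geq\varepsilon/2$. This compactness argument is the step I expect to need the most care, specifically the precise use of the upper limit.

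\textbf{Step 2: decay of the homogeneous part.} Let $\beta:=\sup_{x\in\mc X_{\bar k}}\|x\|<\infty$. For $k\geq\bar k$ we have $\mc X_k\subseteq\mc X_{\bar k}$ by~\eqref{eq:Xk_monotonicity}. Take $x_0\in\mc X_k$ and any admissible sequences. The state at time $k+1$ is
\[
x_{k+1}=\Psi_{k+1,0}x_0+\sum_{i=0}^{k}\Psi_{k+1,i+1}w_i ,
\]
where the sum lies in $\mc R_{k+1}$ and $\|\Psi_{k+1,0}x_0\|\leq c\rho^{k+1}\beta$. Choose $q$ with $q\geq\bar k+1$, $q\geq\ell$, and $c\rho^{q}\beta\leq\varepsilon/2$, and set $k:=q-1\geq\bar k$. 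Then
\[
x_{k+1}\in\mc R_{q}+\tfrac{\varepsilon}{2}\mc B\subseteq\mc R+\varepsilon\mc B\subseteq\mc X .
\]
Hence $\mc X_{q-1}\subseteq\mc Y_{q}$.

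\textbf{Step 3: conclude.} Condition~\eqref{eq:finite:incremental_condition} holds with $k=q-1$. Theorem~\ref{thm:finite_determination} then yields $\mc M=\mc X_{q-1}=\mc X_q$ with $q\geq\bar k+1$, as claimed. Nonemptiness can also be read off here, because $\mc R\subseteq\mc M$ by maximality: $\mc R$ is robust positively invariant and lies in $\mc X$. Note that if the case $\mc X_{\bar k}=\emptyset$ were to arise, it would contradict $\mc R\subseteq\mc M\subseteq\mc X_{\bar k}$, so $\beta$ is well defined and finite.
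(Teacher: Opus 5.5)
Your proposal is correct and follows essentially the same route as the paper. Both arguments use an $\varepsilon$-margin around the compact set $\mc R$ inside $\operatorname{int}(\mc X)$, exponential decay of the homogeneous term $\Psi_{q,0}x_0$ on the bounded iterate $\mc X_{\bar k}$, uniform approach of $\mc R_q$ to $\mc R$ by compactness, and then the incremental stopping test (iv) of Theorem~\ref{thm:finite_determination} with $k=q-1$. The differences are cosmetic: you obtain $\mc R_q\subseteq\mc R+\tfrac{\varepsilon}{2}\mc B$ by a direct subsequence argument and split $\varepsilon$ in halves, whereas the paper shows $\sup_{y\in\mc K_\ell}\operatorname{dist}(y,\mc R)\to0$ for the nested compact sets $\mc K_\ell$ and bounds the distance by $cb\rho^q+\delta_q$.
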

\begin{proof}
    The proof can be found in Appendix~\ref{app:finite_strict_nonempty}.
\end{proof}
When $\mc X$ is compact, the boundedness condition in
Theorem~\ref{thm:finite_strict_nonempty} is automatically satisfied
with $\bar k=0$. This gives the following immediate consequence.

\begin{corollary}
\label{cor:finite_determination_compact_X}
    Suppose that the nonempty matrix family $\mathscr A$ is uniformly
    exponentially stable and that $\mc W$ is nonempty and bounded.
    Let $\mc X\subseteq\R^n$ be compact. If
    $\mc R\subseteq\operatorname{int}(\mc X),$
    then the maximal robust positively invariant set $\mc M$ is
    nonempty and finitely determined.
\end{corollary}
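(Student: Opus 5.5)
This corollary is a direct specialization of Theorem~\ref{thm:finite_strict_nonempty}, so the plan is simply to verify its hypotheses. The standing assumptions are identical. The matrix family $\mathscr A$ is nonempty and uniformly exponentially stable, $\mc W$ is nonempty and bounded, and $\mc R\subseteq\operatorname{int}(\mc X)$. The only remaining hypothesis is that some iterate $\mc X_{\bar k}$ is bounded. I would take $\bar k=0$: since $\mc X_0=\mc X$ by construction of \eqref{eq:comp:iteration}, and $\mc X$ is compact and hence bounded, this hypothesis holds.

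Invoking Theorem~\ref{thm:finite_strict_nonempty} then yields an integer $q\geq 1$ with $\mc M=\mc X_{q-1}$. This is finite determination in the sense of the definition, with determination index at most $q-1$. The same theorem also gives nonemptiness of $\mc M$.

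As a sanity check on nonemptiness independent of the theorem, $\mc R\subseteq\operatorname{int}(\mc X)\subseteq\mc X$ together with closedness of $\mc X$ lets one apply Theorem~\ref{thm:nonempty} directly. Closedness follows from compactness.

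There is no genuine obstacle here; all the analytical work resides in Theorem~\ref{thm:finite_strict_nonempty}. The only point requiring care is to confirm that the iteration is initialized at $\mc X_0=\mc X$, so that boundedness of $\mc X$ is exactly boundedness of the zeroth iterate.
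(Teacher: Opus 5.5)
Your proposal is correct and matches the paper's proof: both apply Theorem~\ref{thm:finite_strict_nonempty} with $\bar k=0$, using that $\mc X_0=\mc X$ is compact and therefore bounded. Your extra nonemptiness check via Theorem~\ref{thm:nonempty} is valid but redundant, since Theorem~\ref{thm:finite_strict_nonempty} already gives nonemptiness.
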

\begin{proof}
    Since $\mc X_0=\mc X$ is compact, it is bounded. The conclusion follows
    from Theorem~\ref{thm:finite_strict_nonempty} with $\bar k=0$.
\end{proof}

\section{Algorithmic Computation}
\label{sec:algorithmic_computation}
The results of Section~\ref{sec:TMRPIS} lead directly to three algorithms 
for generating the common set-iteration sequence. This section first
formulates these algorithms in terms of general set operations and then
develops exact polyhedral implementations, together with their
computational complexity and implementation tradeoffs.

\subsection{Algorithms and Computational Primitives}

Algorithm~\ref{alg:standard_self_restricted} combines the standard and
self-restricted iterations. At iteration $k$, it first constructs the
intermediate predecessor set
$\overline{\mc X}_{k+1}
    =
    \bigcap_{A\in\mathscr A}
    A^{-1}(\mc X_k-\mc W).$
This requires forming the Pontryagin difference $\mc X_k-\mc W$,
computing its preimage under each admissible matrix, and intersecting
the resulting preimages over $\mathscr A$.
The inclusion
$\mc X_k\subseteq\overline{\mc X}_{k+1}$ is used as the exact
stopping test. By Theorem~\ref{thm:finite_determination}, if this
condition holds, then $\mc M=\mc X_k$. Otherwise, the next iterate is
formed as
$\mc X_{k+1}
    =
    \overline{\mc X}_{k+1}\bigcap\mc C_k,$
where $\mc C_k=\mc X$ for the standard iteration and
$\mc C_k=\mc X_k$ for the self-restricted iteration. Because
$\mc X_k\subseteq\mc C_k$ in both cases, intersecting with $\mc C_k$
does not affect the stopping test, which may therefore be evaluated
before $\mc X_{k+1}$ is constructed.

\begin{algorithm}[!t]
\caption{Standard and Self-Restricted Set Iterations}
\label{alg:standard_self_restricted}
\begin{algorithmic}[1]
    \Require $\mathscr A$, $\mc W$, $\mc X$, and
    $\mathrm{mode}\in
    \{\mathrm{standard},\mathrm{self\text{-}restricted}\}$
    \Ensure The exact set $\mc M$, if finite determination or emptiness
    is detected
    \State $k\gets0$
    \State $\mc X_0\gets\mc X$
    \Loop
        \State Compute
        $\displaystyle
        \overline{\mc X}_{k+1}
        \gets
        \bigcap_{A\in\mathscr A}
        A^{-1}(\mc X_k-\mc W)$
        \If{$\mc X_k\subseteq\overline{\mc X}_{k+1}$}
            \State \Return $\mc M=\mc X_k$
        \EndIf
        \If{$\mathrm{mode}=\mathrm{standard}$}
            \State $\mc C_k\gets\mc X$
        \ElsIf{$\mathrm{mode}=\mathrm{self\text{-}restricted}$}
            \State $\mc C_k\gets\mc X_k$
        \EndIf
        \State Compute
        $\mc X_{k+1}
        \gets
        \overline{\mc X}_{k+1}\bigcap\mc C_k$
        \If{$\mc X_{k+1}=\emptyset$}
            \State \Return $\mc M=\emptyset$
        \EndIf
        \State $k\gets k+1$
    \EndLoop
\end{algorithmic}
\end{algorithm}

\begin{algorithm}[!t]
\caption{Incremental Set Iteration}
\label{alg:incremental_set_iteration}
\begin{algorithmic}[1]
    \Require $\mathscr A$, $\mc W$, and $\mc X$
    \Ensure The exact set $\mc M$, if finite determination or emptiness
    is detected
    \State $k\gets0$
    \State $\mc X_0\gets\mc X$
    \Loop
        \State Compute $\mc Y_{k+1}$ from~\eqref{eq:comp:Y_k}
        \If{$\mc X_k\subseteq\mc Y_{k+1}$}
            \State \Return $\mc M=\mc X_k$
        \EndIf
        \State Compute
        $\mc X_{k+1}\gets\mc X_k\bigcap\mc Y_{k+1}$
        \If{$\mc X_{k+1}=\emptyset$}
            \State \Return $\mc M=\emptyset$
        \EndIf
        \State $k\gets k+1$
    \EndLoop
\end{algorithmic}
\end{algorithm}

Algorithm~\ref{alg:incremental_set_iteration} instead constructs the
new constraint set $\mc Y_{k+1}$ appearing in the incremental
iteration. By~\eqref{eq:comp:Y_k}, this involves, for every
matrix-product tuple
$\boldsymbol{\Psi}_{k+1}\in\mathfrak B_{k+1}(\mathscr A)$, forming the
accumulated disturbance set
$\sum_{i=0}^{k}\Psi_{k+1,i+1}\mc W,$
taking its Pontryagin difference from $\mc X$, and computing the
preimage under the full product $\Psi_{k+1,0}$. Intersecting these
preimages over all admissible matrix-product tuples gives
$\mc Y_{k+1}$. The inclusion $\mc X_k\subseteq\mc Y_{k+1}$ is the 
corresponding exact stopping test. If it holds, 
Theorem~\ref{thm:finite_determination}
gives $\mc M=\mc X_k$. Otherwise, the newly imposed constraint is
appended through
$\mc X_{k+1}=\mc X_k\bigcap\mc Y_{k+1}$.


By Proposition~\ref{prop:equivalent_iterations}, the two algorithms
generate the same decreasing sequence
$\{\mc X_k\}_{k\in\N}$. If an iterate becomes empty, then
Corollary~\ref{cor:MRPI} gives
$\mc M\subseteq\mc X_{k+1}=\emptyset$, and hence
$\mc M=\emptyset$. If neither an exact stopping condition nor emptiness
is detected, the algorithms continue to generate the set-iteration
sequence; finite termination is not guaranteed in general. In a numerical 
implementation, a maximum
iteration index $k_{\max}$ may therefore be imposed. If this limit is
reached before termination, the current iterate $\mc X_{k_{\max}}$
provides an outer approximation of $\mc M$, but finite determination
has not been certified.
The emptiness tests in
Algorithms~\ref{alg:standard_self_restricted} and
\ref{alg:incremental_set_iteration} may be omitted when nonemptiness
has been certified a priori. Under the assumptions of
Theorem~\ref{thm:nonempty}, such a certificate is provided by
$\mc R\subseteq\mc X$, or by a certified outer approximation
$\mc R^{\mathrm{out}}$ satisfying
$\mc R\subseteq\mc R^{\mathrm{out}}\subseteq\mc X$; see
Corollary~\ref{cor:nonempty_contraction}.

In summary, Algorithms~\ref{alg:standard_self_restricted} and
\ref{alg:incremental_set_iteration} rely on the same basic
set-theoretic operations: Pontryagin differences, linear preimages,
intersections, inclusion tests, and emptiness tests. The former
constructs the next iterate from the one-step robust predecessor of
$\mc X_k$, whereas the latter directly constructs $\mc Y_{k+1}$ and
updates the current iterate according to
$\mc X_{k+1}=\mc X_k\bigcap\mc Y_{k+1}$. Their relative computational
cost depends on the representations of $\mathscr A$, $\mc W$, and the
iterated sets.

\subsection{Polyhedral Implementations for Finite and Polytopic Matrix Families}
\label{subsec:polyhedral_polytopic}
We now specialize the set-theoretic algorithms developed above to
polyhedral state constraints and finite or polytopic matrix families.
Throughout this subsection, $\mc W$ is assumed to be nonempty and
bounded, and all support-function evaluations required by the
algorithms are assumed to be performed exactly. Under these
assumptions, every finite iterate admits an exact polyhedral
representation.  

Let the nonempty polyhedral state constraint set be represented as
\begin{equation}
\label{eq:poly:X}
    \mc X
    =
    \{x\in\R^n:H_{\mc X}x\leq h_{\mc X}\},
\end{equation}
where
$H_{\mc X}\in\R^{m_{\mc X}\times n}$ and
$h_{\mc X}\in\R^{m_{\mc X}}$. Let
$\mathscr A_{\rm v} := \{\overline A_1,\ldots,\overline A_s\}$
denote a finite family of matrices. We consider either the finite
matrix family
\begin{equation}
\label{eq:poly:finite_A}
    \mathscr A
    =
    \mathscr A_{\rm v}
    =
    \{\overline A_1,\ldots,\overline A_s\},
\end{equation}
or the polytopic matrix family
\begin{equation}
\label{eq:poly:polytopic_A}
    \mathscr A
    =
    \operatorname{conv}(\mathscr A_{\rm v})
    =
    \operatorname{conv}
    \{\overline A_1,\ldots,\overline A_s\}.
\end{equation}

The following result establishes the polyhedrality of the finite
iterates and shows that, in the polytopic case, the full matrix family
can be replaced exactly by the finite family $\mathscr A_{\rm v}$.
Consequently, both cases admit a common finite-family implementation.

\begin{corollary}
\label{cor:polyhedral_iterates}
    Let $\mc X$ be given by~\eqref{eq:poly:X}, suppose that $\mc W$ is nonempty and
    bounded, and suppose that $\mathscr A$ is given by either
    \eqref{eq:poly:finite_A} or~\eqref{eq:poly:polytopic_A}. Consider
    the sequence $\{\mc X_k\}_{k\in\N}$ generated by any one of the
    equivalent iterations
    \eqref{eq:equiv_iteration_1}--\eqref{eq:equiv_iteration_3}, with
    $\mc X_0=\mc X$, and the sequence
    $\{\mc Y_k\}_{k\in\N}$ defined by $\mc Y_0=\mc X$
    and~\eqref{eq:comp:Y_k}. Consider also the intermediate predecessor set sequence $\{\overline{\mc X}_{k}\}_{k\ge1}$ defined in~\eqref{eq:intermediate_predecessor}. Then:
    \begin{enumerate}
    \renewcommand{\labelenumi}{(\roman{enumi})}
        \item the sets $\mc X_k$ and $\mc Y_k$ are polyhedral for every
        $k\in\N$. Consequently, if $\mc M$ is finitely determined, then
        $\mc M$ is polyhedral;

        \item in the polytopic case~\eqref{eq:poly:polytopic_A}, for every
        $k\in\N$, the
        intermediate predecessor set admits the exact finite-family
        representation
        \begin{equation}
        \label{eq:poly:vertex_predecessor}
            \overline{\mc X}_{k+1}
            =
            \bigcap_{A\in\mathscr A}
            A^{-1}(\mc X_k-\mc W)
            =
            \bigcap_{i=1}^{s}
            \overline A_i^{-1}(\mc X_k-\mc W),
        \end{equation}
        and, for every $k\geq1$,
        \begin{align}
            \mc Y_k
            &=
            \bigcap_{\boldsymbol{\Psi}_k
            \in\mathfrak B_k(\mathscr A)}
            \Psi_{k,0}^{-1}
            \left(
                \mc X
                -
                \sum_{i=0}^{k-1}
                \Psi_{k,i+1}\mc W
            \right)
            \nonumber\\
            &=
            \bigcap_{\boldsymbol{\Psi}_k
            \in\mathfrak B_k(\mathscr A_{\rm v})}
            \Psi_{k,0}^{-1}
            \left(
                \mc X
                -
                \sum_{i=0}^{k-1}
                \Psi_{k,i+1}\mc W
            \right).
            \label{eq:poly:vertex_Yk}
        \end{align}
    \end{enumerate}
\end{corollary}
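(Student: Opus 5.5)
The plan is to prove (ii) first, reducing the polytopic family to its vertices, and then obtain (i) by induction using support functions.

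\textbf{Step 1: Pontryagin difference of a polyhedron.} For any polyhedron $\mc P=\{x:Hx\leq h\}$ and any nonempty bounded $\mc V\subseteq\R^n$, I will check directly from the definition that
\[
\mc P-\mc V=\{z:Hz\leq h-\support{\mc V}{H}\}.
\]
Indeed, $z+v\in\mc P$ for all $v\in\mc V$ holds iff $\sup_{v\in\mc V}H_iv\leq h_i-H_iz$ for every row $i$. Boundedness of $\mc V$ makes every entry of $\support{\mc V}{H}$ finite, so the right-hand side is again a polyhedron (possibly empty). Linear preimages of polyhedra are polyhedra, since $A^{-1}\{x:Hx\leq g\}=\{x:HAx\leq g\}$.

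\textbf{Step 2: vertex reduction for $\overline{\mc X}_{k+1}$.} Write $\mc X_k-\mc W=\{z:G_kz\leq g_k\}$. The inclusion "$\subseteq$" in \eqref{eq:poly:vertex_predecessor} is trivial because $\mathscr A_{\rm v}\subseteq\mathscr A$. For the converse, take $x$ with $G_k\overline A_ix\leq g_k$ for all $i$, and let $A=\sum_i\lambda_i\overline A_i\in\mathscr A$. Then
\[
G_kAx=\sum_i\lambda_iG_k\overline A_ix\leq g_k,
\]
because the map $A\mapsto G_kAx$ is linear and the constraints are convex. This gives $x\in A^{-1}(\mc X_k-\mc W)$.

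\textbf{Step 3: vertex reduction for $\mc Y_k$.} For \eqref{eq:poly:vertex_Yk} I will use the trajectory form \eqref{eq:Y_k} rather than the product form. I expect this to be the main obstacle, because $x_k$ is multilinear, not linear, in $(A_0,\ldots,A_{k-1})$, so the single convexity step of Step 2 no longer applies directly. My approach is to fix $x_0$ and the disturbances $w_0,\ldots,w_{k-1}$. For each row $a^\top$ of $H_{\mc X}$, the quantity $a^\top x_k$ is affine in each $A_j$ separately when the other matrices are held fixed.

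The argument then replaces the matrices one at a time. Start from a sequence in which each $A_j=\sum_i\lambda^{(j)}_i\overline A_i$. Take $A_0$ first: $a^\top x_k$ is a convex combination, with weights $\lambda^{(0)}_i$, of the values obtained by substituting $A_0=\overline A_i$. Hence some vertex $\overline A_i$ gives a value at least as large. Repeat this for $A_1,\ldots,A_{k-1}$, always keeping the earlier replacements. After $k$ steps we have a vertex sequence whose value of $a^\top x_k$ dominates the original one.

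It follows that the worst case of $a^\top x_k$ over $\mathscr A$ equals the worst case over $\mathscr A_{\rm v}$. Therefore the constraint "$x_k\in\mc X$ for all $\mathscr A$-sequences" is equivalent to the same constraint for all vertex sequences. This proves \eqref{eq:poly:vertex_Yk}.

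\textbf{Step 4: polyhedrality, statement (i).} By Steps 2--3, in both cases \eqref{eq:poly:finite_A} and \eqref{eq:poly:polytopic_A} the intersections may be taken over finitely many terms: $s$ matrices, or $s^k$ product tuples. The sum $\sum_i\Psi_{k,i+1}\mc W$ is bounded because $\mc W$ is bounded. So by Step 1 each $\mc Y_k$ is a finite intersection of polyhedra, hence polyhedral.

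Then $\mc X_k=\bigcap_{j\leq k}\mc Y_j$ by \eqref{eq:finite:XkYk}, so each $\mc X_k$ is polyhedral as well. Alternatively, this follows by induction from the standard iteration \eqref{eq:equiv_iteration_1} using Steps 1--2. Finally, if $\mc M$ is finitely determined, then $\mc M=\mc X_k$ for some $k$, and so $\mc M$ is polyhedral.
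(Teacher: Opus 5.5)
Your proof is correct. Its skeleton matches the paper's: the support-function formula for the Pontryagin difference, the convexity argument for $\overline{\mc X}_{k+1}$, and the finiteness of $\mathfrak B_k(\mathscr A_{\rm v})$. Two steps take a different route.

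\textbf{Vertex reduction of $\mc Y_k$.} The paper writes $A_j=\sum_r\lambda_{j,r}\overline A_r$ and expands the recursion globally as $x_k=\sum_\sigma\alpha_\sigma x_k^\sigma$, with product weights $\alpha_\sigma=\prod_j\lambda_{j,\sigma_j}$. Every endpoint is then a convex combination of the $s^k$ vertex-trajectory endpoints with the same $x_0$ and disturbances, so only convexity of $\mc X$ is needed. You instead fix a row $a^\top$ of $H_{\mc X}$ and replace the matrices one at a time, using that $a^\top x_k$ is affine in each $A_j$ separately. This avoids the multi-index weight bookkeeping. It also gives directly the row-wise fact that the worst case of $a^\top x_k$ over $\mathscr A$ equals that over $\mathscr A_{\rm v}$, which is exactly what the half-space implementation uses. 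The price is that your argument relies on the half-space description of $\mc X$. It also yields a row-dependent dominating vertex sequence rather than the paper's stronger convex-hull statement.

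\textbf{Polyhedrality of $\mc X_k$.} You obtain it from $\mc X_k=\bigcap_{j\le k}\mc Y_j$, whereas the paper inducts along the predecessor recursion.

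\textbf{Ordering.} Step 2 writes $\mc X_k-\mc W$ in half-space form before polyhedrality of $\mc X_k$ has been established in the polytopic case. This is harmless for two reasons: Step 2 only uses convexity of $\mc X_k-\mc W$, which is available from Corollary~\ref{cor:MRPI} as in the paper, and your main route in Step 4 does not depend on Step 2. You should still state this dependency explicitly.
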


\begin{proof}
    The proof can be found in Appendix~\ref{app:polyhedral_iterates}.
\end{proof}

Accordingly, the algorithms below are written solely in terms of the
finite family $\mathscr A_{\rm v}$. This is immediate in the finite
case~\eqref{eq:poly:finite_A}, while in the polytopic
case~\eqref{eq:poly:polytopic_A} it follows from the exact vertex
reduction established in Corollary~\ref{cor:polyhedral_iterates}.
In what follows, $\operatorname{reduce}(H,h)$ denotes an equivalent
half-space representation obtained by deleting identified redundant
inequalities from the system $Hx\leq h$.

\subsubsection{Standard and Self-Restricted Implementations}
Suppose that, at iteration $k$, the current iterate is represented as
\begin{equation}
    \mc X_k
    =
    \{x\in\R^n:H_kx\leq h_k\},
\end{equation}
where $H_k\in\R^{m_k\times n}$ and $h_k\in\R^{m_k}$. Since
$\mc W$ is nonempty and bounded, its support function is finite-valued. 
The Pontryagin difference therefore admits the representation
\begin{equation}
\label{eq:poly:pontryagin_difference}
    \mc X_k-\mc W
    =
    \left\{
        z\in\R^n:
        H_kz
        \leq
        h_k-\support{\mc W}{H_k}
    \right\}.
\end{equation}
Let $d_k:=\support{\mc W}{H_k}$. By
Corollary~\ref{cor:polyhedral_iterates}, the intermediate predecessor
set given in \eqref{eq:poly:vertex_predecessor} can be written as
\begin{equation}
\label{eq:poly:intermediate_representation}
    \overline{\mc X}_{k+1}
    =
    \{x\in\R^n:
    \overline H_{k+1}x\leq\overline h_{k+1}\},
\end{equation}
where
\begin{equation}
\label{eq:poly:intermediate_matrices}
    \overline H_{k+1}
    =
    \begin{bmatrix}
        H_k\overline A_1\\
        \vdots\\
        H_k\overline A_s
    \end{bmatrix},
    \qquad
    \overline h_{k+1}
    =
    \mathbf 1_s\otimes(h_k-d_k).
\end{equation}
Here, $\mathbf 1_s\in\R^s$ denotes the vector of ones and $\otimes$
denotes the Kronecker product.

The exact stopping test
$\mc X_k\subseteq\overline{\mc X}_{k+1}$ can be checked through the
componentwise condition
\begin{equation}
\label{eq:poly:standard_stopping}
    \support{\mc X_k}{\overline H_{k+1}}
    \leq
    \overline h_{k+1}.
\end{equation}
If this condition holds, then
Theorem~\ref{thm:finite_determination} gives
$\mc M=\mc X_k$. Otherwise, the standard iteration constructs
$\mc X_{k+1}=\overline{\mc X}_{k+1}\bigcap\mc X$ by appending
$(H_{\mc X},h_{\mc X})$ to the half-space representation of
$\overline{\mc X}_{k+1}$. The resulting polyhedral implementation of the standard iteration is
summarized in Algorithm~\ref{alg:poly_standard}.

\begin{algorithm}[!t]
\caption{Standard Iteration: Polyhedral Implementation}
\label{alg:poly_standard}
\begin{algorithmic}[1]
\Require
$\mathscr A_{\rm v}
=\{\overline A_1,\ldots,\overline A_s\}$,
$\mc W$, $H_{\mc X}$, and $h_{\mc X}$
\Ensure A half-space representation of $\mc M$, or the certificate
$\mc M=\emptyset$, if detected
\State $k\gets0$
\State $(H_0,h_0)\gets(H_{\mc X},h_{\mc X})$
\Loop
    \State $\mc X_k\gets\{x:H_kx\leq h_k\}$
    \State $d_k\gets\support{\mc W}{H_k}$
    \State
    $\displaystyle
    \overline H_{k+1}
    \gets
    \begin{bmatrix}
        H_k\overline A_1\\
        \vdots\\
        H_k\overline A_s
    \end{bmatrix}$
    \State
    $\overline h_{k+1}
    \gets
    \mathbf 1_s\otimes(h_k-d_k)$
    \State
    $r\gets\support{\mc X_k}{\overline H_{k+1}}$
    \If{$r\leq\overline h_{k+1}$}
        \State \Return $\mc M=\mc X_k$
    \EndIf
    \State
    $\displaystyle
    \widehat H_{k+1}
    \gets
    \begin{bmatrix}
        \overline H_{k+1}\\
        H_{\mc X}
    \end{bmatrix}$
    \State
    $\displaystyle
    \widehat h_{k+1}
    \gets
    \begin{bmatrix}
        \overline h_{k+1}\\
        h_{\mc X}
    \end{bmatrix}$
    \If{$\{x:\widehat H_{k+1}x\leq\widehat h_{k+1}\}
        =\emptyset$}
        \State \Return $\mc M=\emptyset$
    \EndIf
    \If{redundancy removal is enabled}
        \State
        $(H_{k+1},h_{k+1})
        \gets
        \operatorname{reduce}
        (\widehat H_{k+1},\widehat h_{k+1})$
    \Else
        \State
        $(H_{k+1},h_{k+1})
        \gets
        (\widehat H_{k+1},\widehat h_{k+1})$
    \EndIf
    \State $k\gets k+1$
\EndLoop
\end{algorithmic}
\end{algorithm}

The self-restricted implementation uses the same intermediate
representation and stopping test. It differs only in the construction
of the next iterate: instead of appending
$(H_{\mc X},h_{\mc X})$, it appends the current representation
$(H_k,h_k)$.

\subsubsection{Incremental Implementation}
For each $k\in\N$, let
$\Xi_{k+1}:=\{1,\ldots,s\}^{k+1}$ denote the set of matrix-index
sequences of length $k+1$. Each
$\sigma=(\sigma_0,\ldots,\sigma_k)\in\Xi_{k+1}$ determines the ordered
matrix sequence
$(\overline A_{\sigma_0},\ldots,\overline A_{\sigma_k})$
from the finite family $\mathscr A_{\rm v}$. Let
\begin{equation}
    \boldsymbol{\Psi}_{k+1}(\sigma)
    :=
    \bigl(
        \Psi_{k+1,0}(\sigma),
        \ldots,
        \Psi_{k+1,k+1}(\sigma)
    \bigr)
\end{equation}
denote the corresponding matrix-product tuple, with
\begin{equation*}
    \begin{aligned}
        \Psi_{k+1,k+1}(\sigma)
        &:=
        I,\\
        \Psi_{k+1,i}(\sigma)
        &:=
        \overline A_{\sigma_k}\cdots\overline A_{\sigma_i},
        \qquad i=0,\ldots,k.
    \end{aligned}
\end{equation*}
The state reached after $k+1$ steps from $x_0$ along this ordered
matrix sequence
$(\overline A_{\sigma_0},\ldots,\overline A_{\sigma_k})$ is
\begin{equation*}
    x_{k+1}
    =
    \Psi_{k+1,0}(\sigma)x_0
    +
    \sum_{i=0}^{k}
    \Psi_{k+1,i+1}(\sigma)w_i.
\end{equation*}

For a fixed $\sigma\in\Xi_{k+1}$, requiring
$x_{k+1}\in\mc X$ for every
$w_0,\ldots,w_k\in\mc W$ is equivalent to
\begin{equation}
    \label{eq:poly:sequence_constraint}
    H_{\mc X}\Psi_{k+1,0}(\sigma)x_0
    \leq
    h_{\mc X}
    -
    \sum_{i=0}^{k}
    \support{\mc W}{
        H_{\mc X}\Psi_{k+1,i+1}(\sigma)
    }
\end{equation}
where the inequality is understood componentwise.

The resulting polyhedral implementation of the incremental iteration is
summarized in Algorithm~\ref{alg:poly_incremental}. In Algorithm~\ref{alg:poly_incremental}, the two sides of
\eqref{eq:poly:sequence_constraint} are evaluated by processing the
matrices in reverse time order. For each $\sigma\in\Xi_{k+1}$, initialize
$Q_\sigma:=H_{\mc X}$ and $\Delta_\sigma:=\mathbf{0}_{m_{\mc X}}$, where $\mathbf{0}_{m_{\mc X}}\in\R^{m_{\mc X}}$ denotes the vector of zeros. At each step, the 
vector $\support{\mc W}{Q_\sigma}$ is added to
$\Delta_\sigma$, after which $Q_\sigma$ is right-multiplied by the
corresponding matrix $\overline A_{\sigma_i}$. After all $k+1$
matrices have been processed,
\begin{equation}
    \begin{aligned}
        Q_\sigma
        &=
        H_{\mc X}\Psi_{k+1,0}(\sigma),\\
        \Delta_\sigma
        &=
        \sum_{i=0}^{k}
        \support{\mc W}{
            H_{\mc X}\Psi_{k+1,i+1}(\sigma)
        }.
    \end{aligned}
\end{equation}
Hence, the admissible initial states associated with $\sigma$ satisfy
\begin{equation}
    Q_\sigma x_0
    \leq
    h_{\mc X}-\Delta_\sigma.
\end{equation}
Stacking these inequalities over all
$\sigma\in\Xi_{k+1}$ yields the half-space representation
\begin{equation}
\label{eq:poly:Yk_halfspace}
    \mc Y_{k+1}
    =
    \{x\in\R^n:G_{k+1}x\leq g_{k+1}\}.
\end{equation}

The exact incremental stopping test
$\mc X_k\subseteq\mc Y_{k+1}$ is evaluated through
\begin{equation}
\label{eq:poly:incremental_stopping}
    \support{\mc X_k}{G_{k+1}}
    \leq
    g_{k+1},
\end{equation}
where the inequality is understood componentwise. If the test holds,
Theorem~\ref{thm:finite_determination} gives
$\mc M=\mc X_k$. Otherwise, the next iterate is formed as
$\mc X_{k+1}=\mc X_k\bigcap\mc Y_{k+1}$.

\begin{algorithm}[!t]
\caption{Incremental Iteration: Polyhedral Implementation}
\label{alg:poly_incremental}
\begin{algorithmic}[1]
\Require
$\mathscr A_{\rm v}
=\{\overline A_1,\ldots,\overline A_s\}$,
$\mc W$, $H_{\mc X}$, and $h_{\mc X}$
\Ensure A half-space representation of $\mc M$, or the certificate
$\mc M=\emptyset$, if detected
\State $k\gets0$
\State $(H_0,h_0)\gets(H_{\mc X},h_{\mc X})$
\Loop
    \State $\mc X_k\gets\{x:H_kx\leq h_k\}$
    \State Initialize $(G_{k+1},g_{k+1})$ as an empty pair
    \State $\Xi_{k+1}\gets\{1,\ldots,s\}^{k+1}$
    \For{each
        $\sigma=(\sigma_0,\ldots,\sigma_k)\in\Xi_{k+1}$}
        \State $Q_\sigma\gets H_{\mc X}$
        \State $\Delta_\sigma\gets\mathbf{0}_{m_{\mc X}}$
        \For{$i=k,k-1,\ldots,0$}
            \State
            $\Delta_\sigma\gets\Delta_\sigma+\support{\mc W}{Q_\sigma}$
            \State
            $Q_\sigma\gets Q_\sigma\overline A_{\sigma_i}$
        \EndFor
        \State
        $\displaystyle
        G_{k+1}
        \gets
        \begin{bmatrix}
            G_{k+1}\\
            Q_\sigma
        \end{bmatrix}$
        \State
        $\displaystyle
        g_{k+1}
        \gets
        \begin{bmatrix}
            g_{k+1}\\
            h_{\mc X}-\Delta_\sigma
        \end{bmatrix}$
    \EndFor
    \State
    $\mc Y_{k+1}
    \gets
    \{x:G_{k+1}x\leq g_{k+1}\}$
    \State
    $r\gets\support{\mc X_k}{G_{k+1}}$
    \If{$r\leq g_{k+1}$}
        \State \Return $\mc M=\mc X_k$
    \EndIf
    \State
    $\displaystyle
    \widehat H_{k+1}
    \gets
    \begin{bmatrix}
        H_k\\
        G_{k+1}
    \end{bmatrix}$
    \State
    $\displaystyle
    \widehat h_{k+1}
    \gets
    \begin{bmatrix}
        h_k\\
        g_{k+1}
    \end{bmatrix}$
    \If{$\{x:\widehat H_{k+1}x\leq\widehat h_{k+1}\}
        =\emptyset$}
        \State \Return $\mc M=\emptyset$
    \EndIf
    \If{redundancy removal is enabled}
        \State
        $(H_{k+1},h_{k+1})
        \gets
        \operatorname{reduce}
        (\widehat H_{k+1},\widehat h_{k+1})$
    \Else
        \State
        $(H_{k+1},h_{k+1})
        \gets
        (\widehat H_{k+1},\widehat h_{k+1})$
    \EndIf
    \State $k\gets k+1$
\EndLoop
\end{algorithmic}
\end{algorithm}

Algorithm~\ref{alg:poly_standard}, its self-restricted variant, and
Algorithm~\ref{alg:poly_incremental} test the unreduced half-space
representation of $\mc X_{k+1}$ for emptiness before optional
redundancy removal. If the represented polyhedron is empty, the algorithm 
returns
$\mc M=\emptyset$. Otherwise, the representation may be retained or
simplified without changing $\mc X_{k+1}$. Removing redundant
inequalities can reduce the cost of subsequent support-function
evaluations, inclusion tests, and feasibility checks. The associated
computational costs are discussed next.

\subsubsection{Complexity and Implementation Tradeoffs}
\label{subsubsec:polyhedral_complexity}
We compare the direct polyhedral implementations in terms of generated
inequalities, dense matrix arithmetic, optimization subproblems, and
storage.  Recall that
$m_{\mc X}$ and $m_k$ are the numbers of inequalities in the
half-space representations of $\mc X$ and $\mc X_k$, respectively. 
Table~\ref{tab:polyhedral_operation_counts} summarizes the operation
counts before the optional reduction of the newly constructed
representation at iteration $k$. For a matrix $H$ with
$q$ rows, the row-wise quantity $\support{\mc S}{H}$ is counted as
$q$ scalar support-function evaluations.

\begin{table}[htbp!]
\centering
\caption{Numbers of inequalities in the unreduced representation
of $\mc X_{k+1}$ and scalar support-function evaluations at
iteration $k$.}
\label{tab:polyhedral_operation_counts}
\small
\setlength{\tabcolsep}{3pt}
\renewcommand{\arraystretch}{1.12}
    \begin{tabular}{@{}lccc@{}}
        \hline
        Method
        & \# inequalities
        & \# $\support{\mc W}{\cdot}$
        & \# $\support{\mc X_k}{\cdot}$
        \\
        \hline
        Standard
        & $s m_k+m_{\mc X}$
        & $m_k$
        & $s m_k$
        \\
        Self-restricted
        & $(s+1)m_k$
        & $m_k$
        & $s m_k$
        \\
        Incremental
        & $m_k+m_{\mc X}s^{k+1}$
        & $(k+1)m_{\mc X}s^{k+1}$
        & $m_{\mc X}s^{k+1}$
        \\
        \hline
    \end{tabular}
\end{table}

For dense matrices, forming the products
$H_k\overline A_i$, $i=1,\ldots,s$, in
\eqref{eq:poly:intermediate_matrices} requires
$\mathcal O(s m_k n^2)$ arithmetic operations in the standard and
self-restricted implementations. In the incremental implementation,
the update
$Q_\sigma\gets Q_\sigma\overline A_{\sigma_i}$ is performed $k+1$
times for each of the $s^{k+1}$ matrix-index sequences. Since
$Q_\sigma\in\R^{m_{\mc X}\times n}$, the corresponding cost is
$\mathcal O\bigl( (k+1)m_{\mc X}s^{k+1}n^2 \bigr).$
Both costs are quadratic in the state dimension $n$ and linear in the
number of propagated constraint normals. For the standard and
self-restricted implementations, dependence on $k$ enters through
$m_k$. For the incremental implementation, the factor $s^{k+1}$
causes exponential growth with $k$ when $s>1$. When $s=1$, this factor
equals one and the cost becomes
$\mathcal O((k+1)m_{\mc X}n^2)$.

The total computational cost also includes the optimization problems
used for support-function evaluations, stopping tests, and feasibility
tests. Each scalar evaluation of
$\support{\mc X_k}{\cdot}$ is a linear program with $n$ decision
variables and $m_k$ inequality constraints. The feasibility test for
the unreduced next iterate has the number of constraints shown in the
second column of Table~\ref{tab:polyhedral_operation_counts}.
The cost of evaluating $\support{\mc W}{\cdot}$ depends on the
representation of $\mc W$. For example, if $\mc W$ is given in polyhedral
half-space form, each scalar evaluation is a linear program.
Closed-form expressions are available for several common disturbance
sets.
For instance, consider
\[
    \mc W=c+D\mathbb B_p,
    \qquad
    \mathbb B_p:=\{u\in\R^r:\|u\|_p\leq1\},
\]
where \(c\in\R^n\) is the center, \(D\in\R^{n\times r}\) determines
the shape and orientation of the set, and \(p\in[1,\infty]\). Then,
for every direction \(y\in\R^n\),
\[
    h(\mc W,y)
    =
    c^\top y+\|D^\top y\|_{p^\ast},
\]
where \(p^\ast\) is the H\"older conjugate of \(p\).
This class includes possibly degenerate ellipsoids for $p=2$ and
affine images of boxes for $p=\infty$. In such cases, evaluating
$\support{\mc W}{\cdot}$ requires only matrix-vector and norm
computations. Thus, Table~\ref{tab:polyhedral_operation_counts}
separates the number of repeated primitive operations from the
representation-dependent cost of each operation.

To make the dependence on $k$ and $s$ explicit, suppose that no
redundancy removal is performed at any iteration. Let
$m_k^{\rm std}$, $m_k^{\rm self}$, and $m_k^{\rm inc}$ denote the
numbers of inequalities retained by the standard, self-restricted, and
incremental implementations, respectively. Since $\mc X_0=\mc X$, one has
$ m_0^{\rm std} = m_0^{\rm self} = m_0^{\rm inc} = m_{\mc X}.$
The counts in Table~\ref{tab:polyhedral_operation_counts} give
\begin{equation}
\label{eq:poly:constraint_growth}
    m_k^{\rm std}
    =
    m_k^{\rm inc}
    =
    \begin{cases}
        \displaystyle
        m_{\mc X}\frac{s^{k+1}-1}{s-1},
        & s>1,\\[2mm]
        (k+1)m_{\mc X},
        & s=1,
    \end{cases}
\end{equation}
whereas
\begin{equation}
\label{eq:poly:self_constraint_growth}
    m_k^{\rm self}
    =
    m_{\mc X}(s+1)^k.
\end{equation}
When $s>1$, all three unreduced representations grow exponentially
with $k$. The standard and incremental implementations retain the
same number of inequalities, of order $s^k$, whereas the
self-restricted representation grows with the larger base $s+1$.
Their construction costs nevertheless differ: at iteration $k$, the
incremental implementation enumerates $s^{k+1}$ matrix-index sequences
and performs $(k+1)m_{\mc X}s^{k+1}$ scalar evaluations of
$\support{\mc W}{\cdot}$. For fixed $k$, all three constraint counts
are polynomials of degree $k$ in $s$. When $s=1$, the standard and
incremental counts grow linearly with $k$, whereas the unreduced
self-restricted count grows as $2^k$.

A half-space representation with $m$ inequalities requires
$\mathcal O(mn)$ storage for its constraint matrix and
$\mathcal O(m)$ for its right-hand side. Up to constant factors, the
dominant storage requirements at iteration $k$ are therefore
\[
    \mathcal O\bigl(n(s m_k+m_{\mc X})\bigr),\quad
    \mathcal O\bigl(n(s+1)m_k\bigr),\quad
    \mathcal O\bigl(n(m_k+m_{\mc X}s^{k+1})\bigr)
\]
for the standard, self-restricted, and incremental implementations,
respectively. The incremental implementation need not store
$\Xi_{k+1}$ explicitly, since the index sequences can be generated one
at a time. It does, however, retain the
$m_{\mc X}s^{k+1}$ inequalities defining $\mc Y_{k+1}$ unless they are
screened or reduced as they are generated.

\paragraph*{Redundancy removal}
Redundancy removal trades the cost of additional linear programs
against smaller half-space representations in subsequent iterations.
If an unreduced representation contains \(M\) inequalities, complete
linear-programming-based reduction may require up to \(M\) linear
programs, each involving approximately \(M-1\) inequality constraints.
It may therefore dominate the cost of the current iteration when the
candidate representation is large.

In return, reducing the number of retained inequalities lowers memory
usage and may reduce the cost of subsequent support-function
evaluations, stopping tests, and feasibility tests. The resulting
benefit depends on both the number of remaining iterations and the way
the current representation enters the selected update. A practical
strategy is to remove duplicate inequalities, including positive rescalings, 
and other immediately identifiable redundancies at every iteration, while
performing complete reduction periodically or whenever the number of
retained inequalities exceeds a prescribed threshold. The numerical
effects of this tradeoff are examined in
Section~\ref{subsec:redundancy_timing}.

\paragraph*{Choice of implementation}
The three implementations generate the same sequence of sets but
distribute the computational burden differently. The standard
implementation provides a direct general-purpose half-space realization
of the robust predecessor recursion. It avoids explicit enumeration of
matrix-index sequences and produces a smaller unreduced candidate
representation than the self-restricted implementation.

The self-restricted implementation uses the same predecessor
construction and stopping condition, but repeatedly appends the
complete representation of the current iterate. It therefore processes
a larger unreduced candidate system. This disadvantage may be mitigated
when repeated inequalities can be identified or reused efficiently, or
when complete redundancy removal keeps the retained representation
small.

The incremental implementation separates the constraints introduced
at time \(k+1\) from those retained at earlier iterations. This
structure is useful when matrix-index sequences can be pruned,
processed in parallel, or organized in a tree that reuses common
products. Its direct implementation is most attractive when \(s\) and
the expected determination index are moderate. Reducing the
representation of \(\mc X_k\), however, does not reduce the
\(m_{\mc X}s^{k+1}\) candidate inequalities generated for
\(\mc Y_{k+1}\).

Among these direct implementations, the preferable choice therefore depends 
on the matrix-family
size, the expected determination index, the availability of
sequence-pruning or parallelization, and the redundancy-removal
strategy. The resulting runtime tradeoffs are examined numerically in
Sections~\ref{subsec:redundancy_timing} and
\ref{subsec:computational_scaling}.

\FloatBarrier

\section{Numerical and Computational Study}
\label{sec:numerical_study}

This section numerically validates the equivalent set iterations and
examines the implementation tradeoffs predicted by the complexity
analysis in Section~\ref{subsubsec:polyhedral_complexity}.
All experiments were performed in MATLAB R2024b on a 64-bit Windows
laptop equipped with an Intel Core i5-1335U processor and 16~GB of
memory. All linear programs were solved using \texttt{linprog}. The
solver tolerances were set to $10^{-9}$, and a numerical tolerance
of $10^{-7}$ was used for set-inclusion and redundancy tests.

\subsection{Numerical Validation of the Three Implementations}
\label{subsec:validation_three_iterations}

We first numerically verify that the standard, self-restricted, and
incremental implementations generate the same set-iteration sequence
for the following two-dimensional polytopic linear difference
inclusion:
\begin{equation}
    \label{eq:num:benchmark_dynamics}
    x^+
    \in
    \{Ax+w:A\in\mathscr A,\ w\in\mc W\},
\end{equation}
where
$\mathscr A = \operatorname{conv} \{\overline A_1,\overline A_2,
\overline A_3,\overline A_4\}.$
The vertex matrices are
\begin{equation}
\label{eq:num:benchmark_vertices}
    \begin{aligned}
        \overline A_1
        &=
        \begin{bmatrix}
            0.90 & 0.35\\
           -0.20 & 0.85
        \end{bmatrix},
        &
        \overline A_2
        &=
        \begin{bmatrix}
            0.85 & -0.30\\
            0.25 &  0.90
        \end{bmatrix},
        \\[-0.2mm]
        \overline A_3
        &=
        \begin{bmatrix}
            0.92 & 0.18\\
           -0.28 & 0.88
        \end{bmatrix},
        &
        \overline A_4
        &=
        \begin{bmatrix}
            0.87 & -0.15\\
            0.18 &  0.83
        \end{bmatrix}.
    \end{aligned}
\end{equation}
The state constraint and disturbance sets are
\begin{equation}
\label{eq:num:benchmark_sets}
    \mc X=[-1,1]^2,
    \qquad
    \mc W=[-0.025,0.025]^2.
\end{equation}
Since $\mc W$ is a centered box, its support function is
\[
    \support{\mc W}{y}
    =
    0.025\|y\|_1,
    \qquad y\in\R^2.
\]

\begin{figure}[htbp!]
    \centering  
    \includegraphics[width=0.95\columnwidth]{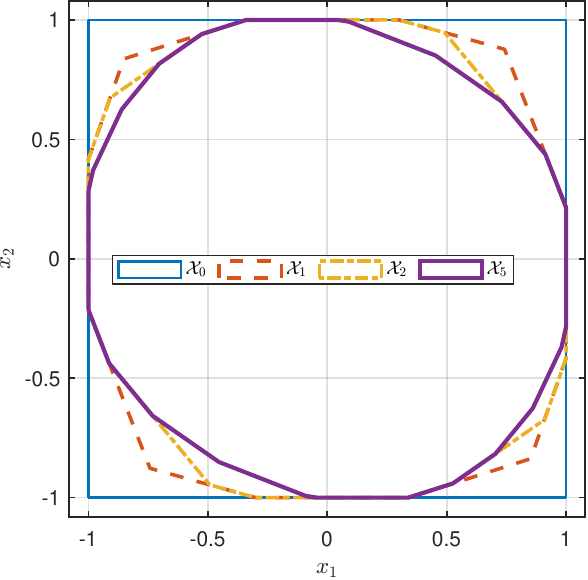}
    \setlength{\abovecaptionskip}{1pt}
    \caption{Selected iterates for the four-vertex polytopic difference
    inclusion, with
    \(\mc X_{5}=\mc M\).}
    \label{fig:num:benchmark_summary}
\end{figure}

Starting from $\mc X_0=\mc X$, the three implementations produced
the same polyhedral iterates up to the prescribed numerical tolerance.
Equality of the corresponding sets was verified at every iteration
through bidirectional set-inclusion tests implemented as linear
programs. The three exact stopping conditions were first satisfied at
the common determination index
$k^\star=5.$
Hence, all three implementations returned
$\mc M=\mc X_{5}.$
After complete redundancy removal, $\mc M$ is represented by
$24$ facet-defining inequalities and occupies
approximately $80.9\%$ of the area of the original constraint set
$\mc X$.

Figure~\ref{fig:num:benchmark_summary} shows the selected iterates
$\mc X_0$, $\mc X_1$, $\mc X_2$, and
$\mc X_{5}$. The early predecessor iterations
transform the axis-aligned box into a polytope with additional inclined
facets, while the subsequent iterates contract toward $\mc M$. As also expected from Corollary~\ref{cor:MRPI}, the central symmetry of $\mc X$ and $\mc W$ is preserved by all displayed iterates.

\subsection{Effect of Redundancy Removal}
\label{subsec:redundancy_timing}

For the four-vertex benchmark introduced in
Section~\ref{subsec:validation_three_iterations}, the standard,
self-restricted, and incremental implementations returned the same
finitely determined set
$\mc M=\mc X_{5}$. At the determination index,
the corresponding unreduced half-space representations of $\mc M$
contain $5460$, $12500$, and $5460$ inequalities, respectively.
Complete redundancy removal reduces each representation to an
equivalent irredundant representation with $24$ facet-defining
inequalities. The reduction in representation size is therefore
substantial for all three implementations, but its effect on
computation time depends on how the representation of the current
iterate enters subsequent updates.

To examine this tradeoff over different determination indices, consider
the six two-vertex polytopic matrix families
\begin{equation}
    \label{eq:num:redundancy_family}
    \begin{aligned}
        \mathscr A_j
        &=
        \operatorname{conv}
        \{\gamma_j\overline A_1,\gamma_j\overline A_2\},
        \qquad j=1,\ldots,6,
        \\
        (\gamma_1,\ldots,\gamma_6)
        &=
        (0.80,0.88,0.94,0.98,1.00,1.01).
    \end{aligned}
\end{equation}
Here, $\overline A_1$ and $\overline A_2$ are the first two
vertex matrices in \eqref{eq:num:benchmark_vertices}, and the state
constraint and additive disturbance sets are those in
\eqref{eq:num:benchmark_sets}. The scaling factors $\{\gamma_j\}_{j=1}^6$ 
were calibrated so that the
corresponding set iterations have determination indices
\[
k_j^*=j, \qquad j=1,\ldots,6.
\]
For comparison, the same six matrix families were also evaluated with
the disturbance set replaced by $\mc W=\{0\}$, while keeping the
state constraint set unchanged.

For every implementation and test case, enabling or disabling
redundancy removal produced the same finitely determined $\mc M$, up to 
numerical tolerance. 
For each configuration, each variant was first run once for warm-up.
Four paired timing runs were then performed, with the execution order
alternated between successive pairs.
Define the runtime ratio
\begin{equation}
\label{eq:num:redundancy_ratio}
    \eta
    :=
    \frac{t_{\mathrm{red}}}{t_{\mathrm{unred}}},
\end{equation}
where $t_{\mathrm{red}}$ and $t_{\mathrm{unred}}$ denote the
corresponding median computation times. Thus, $\eta<1$ indicates
that complete redundancy removal decreases the total computation time.

Table~\ref{tab:num:redundancy_all_methods} reports the detailed results
for $\mathscr A_6$ under the additive disturbance set in
\eqref{eq:num:benchmark_sets}. In the second and third columns, entries
are reported as unreduced/reduced. The second column gives the numbers
of inequalities in the corresponding half-space representations of
$\mc M$, while the third gives the median computation times. The
last column is reported as additive-disturbance/disturbance-free and
counts, among the six matrix families, how many cases satisfy
$\eta<1$.

\begin{table}[htbp!]
\centering
\caption{Complete redundancy removal for
\(\mathscr A_6\) under additive disturbance.}
\label{tab:num:redundancy_all_methods}
\small
\renewcommand{\arraystretch}{1.08}
\setlength{\tabcolsep}{3.0pt}
    \begin{tabular}{lcccc}
        \hline
        Implementation
        & \(\#\) inequalities
        & Time [s]
        & \(\eta\)
        & \shortstack{\(\#\) cases\\with \(\eta<1\)}
        \\
        \hline
        Standard
        & \(508/20\)
        & \(14.01/2.38\)
        & \(0.170\)
        & \(6/5\)
        \\
        Self-restricted
        & \(2916/20\)
        & \(90.65/2.38\)
        & \(0.026\)
        & \(6/4\)
        \\
        Incremental
        & \(508/20\)
        & \(8.07/10.64\)
        & \(1.319\)
        & \(0/0\)
        \\
        \hline
    \end{tabular}
    
\end{table}

For the standard and self-restricted implementations, complete
redundancy removal was faster in all six additive-disturbance cases.
As the determination index increased from $1$ to $6$, the runtime
ratio $\eta$ decreased overall from $0.973$ to $0.170$ for the
standard implementation and from $0.966$ to $0.026$ for the
self-restricted implementation. In the disturbance-free tests, the
benefit was mixed for cases whose determination index was at most
$2$, but reduction was faster for both implementations in every
tested case whose determination index was at least $3$.
These results confirm the tradeoff discussed in
Section~\ref{subsubsec:polyhedral_complexity}. The standard and
self-restricted implementations directly propagate the half-space
representation of $\mc X_k$ in the next predecessor update.
Reducing the number $m_k$ of retained inequalities therefore reduces
the number of transformed constraint normals and support-function
evaluations, as well as the sizes of the linear programs arising in
subsequent stopping and feasibility tests. This benefit is especially
pronounced for the self-restricted implementation, whose unreduced
representation grows as $m_{\mc X}(s+1)^k$.

The incremental implementation behaved differently: complete
redundancy removal was slower in all twelve tested configurations.
Although it reduces the representation size of $\mc X_k$, it does
not reduce the number of candidate inequalities generated for
$\mc Y_{k+1}$. The direct construction of $\mc Y_{k+1}$ still
generates $m_{\mc X}s^{k+1}$ inequalities from the
$s^{k+1}$ matrix-index sequences, independently of the representation
size of $\mc X_k$. Complete reduction therefore introduces
additional redundancy-test linear programs without reducing this
constraint-generation cost.

Thus, complete reduction at every iteration was beneficial for the
standard and self-restricted implementations in the tested cases, but
not for the direct incremental implementation. Periodic or
threshold-triggered reduction may nevertheless remain useful for
controlling the storage required by the latter.

\subsection{Computational Scaling}
\label{subsec:computational_scaling}
We examine the dependence of computation time on the state dimension
$n$, the number $s$ of vertex matrices, the number $m_{\mc X}$
of state-constraint inequalities, and the determination index
$k^\star$. The four experiments are summarized in
Table~\ref{tab:num:sweep_settings}. Redundancy removal and storage of
the full iterate history are disabled throughout.

\begin{table}[htbp!]
\caption{Settings for the computational scaling experiments.}
\label{tab:num:sweep_settings}
\centering
\small
\renewcommand{\arraystretch}{1.05}
\setlength{\tabcolsep}{2.8pt}
    \begin{tabular}{c c c c c c}
        \hline
        Sweep
        & $\qquad$Values$\qquad$
        & \(\ n\ \)
        & \(\ s\ \)
        & \(\ m_{\mc X}\ \)
        & Execution
        \\
        \hline
        \(n\)
        & \(2,4,6,8,10\)
        & --
        & \(2\)
        & \(20\)
        & \(3\) updates
        \\
        \(s\)
        & \(1,\ldots,6\)
        & \(3\)
        & --
        & \(10\)
        & \(3\) updates
        \\
        \(m_{\mc X}\)
        & \(6,8,\ldots,18\)
        & \(3\)
        & \(2\)
        & --
        & \(3\) updates
        \\
        \(k^\star\)
        & \(1,\ldots,6\)
        & \(2\)
        & \(2\)
        & \(4\)
        & exact stopping
        \\
        \hline
    \end{tabular}
\end{table}

The test problems are generated using the MATLAB
Mersenne--Twister random-number generator. For dimension $q$, the
vertex matrices and disturbance set are constructed as
\begin{equation}
\label{eq:num:scaling_generator}
    \begin{aligned}
        B_i^{(q)}
        &=
        I_q+0.35R_i^{(q)}+0.25J_q^+-0.15J_q^-,
        \\
        A_i^{(q)}
        &=
        0.84\,
        \frac{B_i^{(q)}}{\|B_i^{(q)}\|_2},
        \qquad
        \mc W_q=[-0.006,0.006]^q,
    \end{aligned}
\end{equation}
where the entries of $R_i^{(q)}$ are independently drawn from 
the standard normal distribution,
and $J_q^+$ and $J_q^-$ have ones on the first
superdiagonal and first subdiagonal, respectively, and zeros
elsewhere.
Before each sweep, the random-number generator is
reset using \texttt{rng(seed,`twister')}. The $n$-, $s$-,
and $m_{\mathcal X}$-sweeps use seeds 110, 206, and 318,
respectively. The $n$-sweep is constructed from a common
ten-dimensional reference instance with a $20\times 10$
state-constraint matrix. For each $n$, the leading
$n\times n$ principal submatrices of the vertex matrices are
extracted and renormalized, while the first $n$ entries of
each constraint normal are retained and normalized to unit
Euclidean norm. The matrix families in the $s$-sweep and
the state-constraint descriptions in the
$m_{\mathcal X}$-sweep are nested, so that each larger
instance contains all data from the preceding one.

In the first three experiments, emptiness tests are disabled and each
implementation performs three prescribed updates. The exact stopping
test is evaluated but does not terminate these runs. The
$k^\star$-sweep uses the calibrated family in
\eqref{eq:num:redundancy_family} and terminates when the corresponding
exact stopping condition is first satisfied to tolerance $10^{-7}$.
Each reported value is the median of five timed runs following one
warm-up run. 
\begin{figure}[htbp!]
    \centering
    \includegraphics[width=0.995\columnwidth]{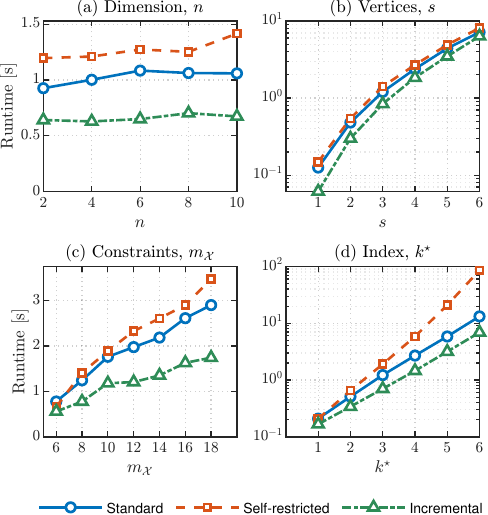}
    \setlength{\abovecaptionskip}{0pt}
    \caption{Median runtimes in the four scaling experiments.
    Panels (a)--(c) use three prescribed updates; panel (d) uses the
    numerical exact stopping tests. Vertical axes are linear with zero
    baselines in (a), (c) and logarithmic in (b), (d).}
    \label{fig:num:runtime_scaling}
\end{figure}

Figure~\ref{fig:num:runtime_scaling} shows that the state dimension has
a comparatively modest effect over the tested range. Increasing $n$
from $2$ to $10$ raises the median times of the standard,
self-restricted, and incremental implementations by approximately
$14\%$, $18\%$, and $5\%$, respectively.
Increasing $m_{\mc X}$ from $6$ to $18$ raises the corresponding
times by factors of approximately $3.73$, $5.25$, and $3.12$.
This growth is consistent with the increase in propagated constraint
normals and support-function evaluations.
The effects of $s$ and $k^\star$ are substantially stronger.
Increasing $s$ from $1$ to $6$ multiplies the median times of
the standard, self-restricted, and incremental implementations by
approximately $57$, $55$, and $103$, respectively. Increasing
$k^\star$ from $1$ to $6$ multiplies the corresponding
end-to-end times by approximately $63$, $411$, and $41$.
These trends agree with the operation counts and unreduced
constraint-growth expressions in
Section~\ref{subsubsec:polyhedral_complexity}. In particular, the
stronger dependence of the incremental implementation on $s$
reflects the enumeration of $s^{k+1}$ matrix-index sequences, while
the rapid growth of the self-restricted implementation with
$k^\star$ reflects its unreduced count
$m_{\mc X}(s+1)^k$.

The direct incremental implementation nevertheless remains the fastest over
the tested parameter ranges. Its stronger dependence on $s$ is
already visible in Fig.~\ref{fig:num:runtime_scaling}(b), where its
runtime grows more rapidly and its advantage narrows as $s$
increases.
\section{Conclusion}
\label{sec:conclusion}
This article has established a unified theoretical and computational
view of maximal robust positive invariance for linear difference
inclusions. The lattice-theoretic fixed-point formulation clarifies the
structure of the maximal set, while the standard, self-restricted, and
incremental recursions are shown to generate the same decreasing set
sequence and to admit equivalent exact stopping tests. The
disturbance-reachable-set analysis further connects the structure of the
maximal set with its nonemptiness and finite determination under
stability and boundedness conditions. In the polyhedral
setting, the exact implementations show that set-theoretic equivalence
does not imply computational equivalence.  The theoretical analysis and
numerical results indicate that the matrix-family size, the determination
index, and the redundancy-removal strategy are key factors in selecting
an implementation.

The flexibility in choosing the initial pre-fixed outer set in Theorem~\ref{thm:pre-fixed-iteration} opens a natural direction for future work on tighter or problem-adapted initialization strategies that may reduce the computational burden of the descending iteration. A second direction is to exploit the structure of the generated constraints to develop more scalable implementations. Further work may also extend the analysis to dynamics with constrained switching or control inputs.

\section*{Appendix}
\label{sec:app}
\renewcommand\thesubsection{\Alph{subsection}}
\setcounter{subsection}{0}
\subsection{Proof of Theorem~\ref{thm:finite_determination}}
\label{app:finite_determination}
We first prove the equivalence between (i) and (ii). Suppose that
$\mc M$ is finitely determined. Then $\mc M=\mc X_k$ for some
$k\in\N$. Since $\mc M$ is a fixed point of $\mc F^{-1}$, we have 
$\mc X_{k+1}
    =
    \mc F^{-1}(\mc X_k)
    =
    \mc F^{-1}(\mc M)
    =
    \mc M
    =
    \mc X_k.$
Hence, \eqref{eq:finite:stationarity_condition} holds. Conversely, suppose that
$\mc X_k\subseteq\mc X_{k+1}$ for some $k\in\N$. Since the sequence
$\{\mc X_k\}_{k\in\N}$ is decreasing, we have 
$\mc X_{k+1}\subseteq\mc X_k,$
and therefore
$\mc X_k=\mc X_{k+1} = \mc F^{-1}(\mc X_k).$
Thus, $\mc X_k$ is a fixed point of $\mc F^{-1}$. Since $\mc M$ is
the greatest fixed point, it holds that
$ \mc X_k\subseteq\mc M.$
On the other hand, Corollary~\ref{cor:MRPI} gives
$\mc M\subseteq\mc X_k$. Hence,
$\mc M=\mc X_k,$
so $\mc M$ is finitely determined.

We next prove the equivalence between (ii) and (iii). By Proposition~\ref{prop:equivalent_iterations}, irrespective of which
of the three equivalent schemes is used to generate the sequence, one
has
$ \mc X_{k+1} = \overline{\mc X}_{k+1}\bigcap\mc X.$
Since $\mc X_k\subseteq\mc X$, it follows that
\[
    \mc X_k\subseteq\mc X_{k+1}
    \quad\Longleftrightarrow\quad
    \mc X_k\subseteq\overline{\mc X}_{k+1}.
\]
Thus, (ii) and (iii) are equivalent.

Finally, by the incremental iteration \eqref{eq:equiv_iteration_3}, we have
\[
    \mc X_k\subseteq\mc X_{k+1}
    \ \Longleftrightarrow\ 
    \mc X_k
    \subseteq
    \mc X_k\bigcap\mc Y_{k+1}
    \ \Longleftrightarrow\ 
    \mc X_k\subseteq\mc Y_{k+1}.
\]
Therefore, (ii) and (iv) are equivalent.

If any one of
\eqref{eq:finite:stationarity_condition},
\eqref{eq:finite:stationarity_condition_}, or
\eqref{eq:finite:incremental_condition}
holds for some $k\in\N$, then all three conditions hold for the same
$k$. In particular,
$\mc X_k=\mc X_{k+1}.$
By the equivalence between (i) and (ii), we have
$\mc M=\mc X_k=\mc X_{k+1}.$
Moreover, Corollary~\ref{cor:MRPI} gives
$\mc M=\mc X_\infty$. Therefore, it holds that
$\mc M = \mc X_\infty = \mc X_k = \mc X_{k+1}.$
\hfill$\square$

\subsection{Proof of Proposition~\ref{prop:minimal_rpi}}
\label{app:minimal_rpi}
By uniform exponential stability, there exist constants $c\geq1$ and
$\rho\in[0,1)$ such that
\[
    \|\Psi_{k,i}\|
    \leq
    c\rho^{k-i},
    \qquad
    0\leq i\leq k,
\]
for every
$\boldsymbol{\Psi}_k\in\mathfrak B_k(\mathscr A)$ and every $k\geq1$.
Since $\mc W$ is bounded, there exists $\bar w\geq0$ such that
$\|w\|\leq\bar w$ for every $w\in\mc W$. Hence, every
$x\in\mc R_k$ satisfies
\[
    \|x\|
    \leq
    \sum_{i=0}^{k-1}
    \|\Psi_{k,i+1}w_i\|
    \leq
    c\bar w
    \sum_{i=0}^{k-1}\rho^{k-i-1}
    \leq
    \frac{c\bar w}{1-\rho}.
\]
Thus, the sets $\{\mc R_k\}_{k\geq1}$ are uniformly bounded.
For each $\ell\geq1$, define
$\mc K_\ell := \operatorname{cl} ( \bigcup_{k\geq\ell}\mc R_k ).$
Since $\mathscr A$ and $\mc W$ are nonempty, every $\mc R_k$ is
nonempty. Therefore, each $\mc K_\ell$ is nonempty. Moreover,
$\mc K_\ell$ is closed and bounded, and hence compact. The sequence is
decreasing:
$ \mc K_{\ell+1}\subseteq\mc K_\ell, \ \ell\geq1.$
Consequently, by the Cantor intersection theorem,
$\mc R = \bigcap_{\ell\geq1}\mc K_\ell$
is nonempty. Since $\mc R$ is a closed subset of the compact set
$\mc K_1$, it is also compact.

We next prove robust positive invariance of $\mc R$. Fix an arbitrary $x\in\mc R$. 
Since
$x\in\mc K_j$ for every $j\geq1$, there exist integers $k_j\geq j$
and points $x_j\in\mc R_{k_j}$ such that
$\|x_j-x\|\leq 1/j.$
Here, $k_j\geq j$ ensures that the selected reachable-set indices
become arbitrarily large, while
$\|x_j-x\|\leq1/j$ implies that $x_j\to x$. Fix arbitrary
$A\in\mathscr A$ and $w\in\mc W$. Appending $A$ and $w$ to the
admissible sequences generating $x_j$ gives
$Ax_j+w\in\mc R_{k_j+1}.$
For any $\ell\geq1$, one has $k_j+1\geq\ell$ for all sufficiently
large $j$, and therefore
$Ax_j+w\in\mc K_\ell.$
Since $\mc K_\ell$ is closed and $Ax_j+w\to Ax+w$, it follows that
$Ax+w\in\mc K_\ell$. As $\ell$ was arbitrary, it holds that 
$Ax+w\in\bigcap_{\ell\geq1}\mc K_\ell=\mc R.$
Thus, we have $\mc F(\mc R)\subseteq\mc R.$

Finally, let $\mc S\subseteq\R^n$ be any nonempty closed set satisfying
$\mc F(\mc S)\subseteq\mc S$. Choose $\bar x\in\mc S$, and let
$x\in\mc R$. Select integers $k_j\to\infty$ and points
$x_j\in\mc R_{k_j}$ such that $x_j\to x$. For every $j\geq1$, by the
definition of $\mc R_{k_j}$, there exist
$\boldsymbol{\Psi}_{k_j} \in \mathfrak B_{k_j}(\mathscr A)$
and disturbances
$w_{j,0},\ldots,w_{j,k_j-1}\in\mc W$ such that
$ x_j = \sum_{i=0}^{k_j-1} \Psi_{k_j,i+1}w_{j,i}.$
Applying the same admissible matrix and disturbance sequences from
$\bar x$ and using $\mc F(\mc S)\subseteq\mc S$ yields
$\Psi_{k_j,0}\bar x+x_j\in\mc S.$
Moreover, uniform exponential stability gives
$\|\Psi_{k_j,0}\bar x\| \leq c\rho^{k_j}\|\bar x\| \rightarrow0.$
Since $x_j\to x$, it follows that
$\Psi_{k_j,0}\bar x+x_j\to x.$
As $\mc S$ is closed, one has $x\in\mc S$. Since $x\in\mc R$ was arbitrary, 
it follows that $\mc R\subseteq\mc S.$
\hfill$\square$

\subsection{Proof of Theorem~\ref{thm:finite_strict_nonempty}}
\label{app:finite_strict_nonempty}

Let $\bar k\in\N$ be such that $\mc X_{\bar k}$ is bounded. Since
$\mc R$ is compact and $\mc R\subseteq\operatorname{int}(\mc X)$,
there exists $\varepsilon>0$ such that
$\mc R+\varepsilon\mathbb B\subseteq\mc X,$
where $\mathbb B$ denotes the closed unit ball associated with
$\|\cdot\|$. Moreover, Proposition~\ref{prop:minimal_rpi} shows that
$\mc R$ is nonempty and robust positively invariant. Together with
$\mc R\subseteq\mc X$, this implies that $\mc M$ is nonempty.

For $q\geq1$, let $\mc F^q(\mc X_{\bar k})$ denote the set of all
admissible states reached after $q$ steps from $\mc X_{\bar k}$. Every
$x_q\in\mc F^q(\mc X_{\bar k})$ can be written as
\[
    x_q
    =
    \Psi_{q,0}x_0+y_q,
    \qquad
    x_0\in\mc X_{\bar k},
    \quad
    y_q\in\mc R_q,
\]
for some $\boldsymbol{\Psi}_q\in\mathfrak B_q(\mathscr A)$.
Since $\mc X_{\bar k}$ is bounded, there exists $b\geq0$ such that
$\|x_0\|\leq b$ for every $x_0\in\mc X_{\bar k}$. Uniform exponential
stability therefore gives
\[
    \sup_{\substack{x_0\in\mc X_{\bar k}\\
                    \boldsymbol{\Psi}_q\in
                    \mathfrak B_q(\mathscr A)}}
    \|\Psi_{q,0}x_0\|
    \leq
    cb\rho^q
    \longrightarrow0.
\]

For each $\ell\geq1$, let
$\mc K_\ell := \operatorname{cl} ( \bigcup_{j\geq\ell}\mc R_j ).$
As established in the proof of
Proposition~\ref{prop:minimal_rpi}, the sets $\mc K_\ell$ are nonempty
and compact, satisfy $\mc K_{\ell+1}\subseteq\mc K_\ell$, and have
intersection
$\bigcap_{\ell\geq1}\mc K_\ell=\mc R.$
Consequently, one has
$\sup_{y\in\mc K_\ell} \operatorname{dist}(y,\mc R) \rightarrow0.$
Indeed, otherwise compactness of $\mc K_1$ would yield a subsequence
converging to a point belonging to every $\mc K_\ell$, and hence to
$\mc R$, while remaining at a positive distance from $\mc R$.
Since $\mc R_q\subseteq\mc K_q$, it follows that
$\delta_q := \sup_{y\in\mc R_q} \operatorname{dist}(y,\mc R) \rightarrow0.$
For $x_q=\Psi_{q,0}x_0+y_q$, the triangle inequality gives
\[
    \operatorname{dist}(x_q,\mc R)
    \leq
    \|\Psi_{q,0}x_0\|
    +
    \operatorname{dist}(y_q,\mc R).
\]
Taking the supremum over all admissible $x_0$,
$\boldsymbol{\Psi}_q$, and $y_q$ yields
\[
    \sup_{x\in\mc F^q(\mc X_{\bar k})}
    \operatorname{dist}(x,\mc R)
    \leq
    cb\rho^q+\delta_q
    \longrightarrow0.
\]
Hence, for some integer $q\geq\bar k+1$, it holds that
$\mc F^q(\mc X_{\bar k}) \subseteq \mc R+\varepsilon\mathbb B
 \subseteq \mc X.$
By the definition of $\mc Y_q$, this implies
$\mc X_{\bar k}\subseteq\mc Y_q$. Since $q-1\geq\bar k$ and
$\{\mc X_k\}_{k\in\N}$ is decreasing, we have $\mc X_{q-1}
 \subseteq \mc X_{\bar k} \subseteq \mc Y_q.$
Thus, condition~\eqref{eq:finite:incremental_condition} holds with
$k=q-1$. Theorem~\ref{thm:finite_determination} therefore yields
$\mc M=\mc X_{q-1}.$
Hence, $\mc M$ is finitely determined.
\hfill$\square$


\subsection{Proof of Corollary~\ref{cor:polyhedral_iterates}}
\label{app:polyhedral_iterates}
We first consider the finite-family case
$\mathscr A=\mathscr A_{\rm v}$. Since $\mc X_0=\mc X$ is
polyhedral, suppose inductively that $\mc X_k$ is polyhedral.
By~\eqref{eq:poly:pontryagin_difference}, the boundedness of
$\mc W$ ensures that $\mc X_k-\mc W$ admits a finite
half-space representation and is therefore polyhedral. Hence,
each preimage
$\overline A_i^{-1}(\mc X_k-\mc W),\ i=1,\ldots,s,$
is polyhedral, as is their finite intersection. Intersecting
this set with either $\mc X$ or $\mc X_k$ shows that
$\mc X_{k+1}$ is polyhedral. By induction, $\mc X_k$ is
polyhedral for every $k\in\N$.

We next consider the sets $\mc Y_k$. Clearly,
$\mc Y_0=\mc X$ is polyhedral. For each $k\geq1$ and each
$\boldsymbol{\Psi}_k\in\mathfrak B_k(\mathscr A_{\rm v})$,
define the accumulated disturbance set
$\mc D_{\boldsymbol{\Psi}_k} := \sum_{i=0}^{k-1}\Psi_{k,i+1}\mc W.$
Since $\mc W$ is nonempty and bounded,
$\mc D_{\boldsymbol{\Psi}_k}$ is also nonempty and bounded,
and its support function is therefore finite-valued. It follows
from~\eqref{eq:poly:X} that
\[
    \mc X-\mc D_{\boldsymbol{\Psi}_k}
    =
    \left\{
        z\in\R^n:
        H_{\mc X}z
        \leq
        h_{\mc X}
        -
        \support{\mc D_{\boldsymbol{\Psi}_k}}{H_{\mc X}}
    \right\},
\]
which is polyhedral. Its preimage under $\Psi_{k,0}$ is
therefore polyhedral. Since
$\mathfrak B_k(\mathscr A_{\rm v})$ is finite,
\eqref{eq:comp:Y_k} expresses $\mc Y_k$ as a finite
intersection of polyhedral sets. Thus, $\mc Y_k$ is
polyhedral for every $k\in\N$.

Now consider the polytopic case
$\mathscr A=\operatorname{conv}(\mathscr A_{\rm v}).$
Since $\mc X$ is polyhedral, it is convex, and
Corollary~\ref{cor:MRPI} implies that $\mc X_k$ is convex for every
$k\in\N$. Hence,
$\mc X_k-\mc W$
is also convex.
Because $\mathscr A_{\rm v}\subseteq\mathscr A$, one has
$\bigcap_{A\in\mathscr A} A^{-1}(\mc X_k-\mc W) \subseteq
 \bigcap_{i=1}^{s} \overline A_i^{-1}(\mc X_k-\mc W).$
For the reverse inclusion, let
$x \in \bigcap_{i=1}^{s} \overline A_i^{-1}(\mc X_k-\mc W).$
Then $\overline A_i x\in\mc X_k-\mc W$ for every $i=1,\ldots,s$.
For any $A\in\mathscr A$, write
\[
    A=\sum_{i=1}^{s}\lambda_i\overline A_i,
    \qquad
    \lambda_i\geq0,
    \qquad
    \sum_{i=1}^{s}\lambda_i=1.
\]
By convexity of $\mc X_k-\mc W$, it holds that
$ Ax = \sum_{i=1}^{s}\lambda_i\overline A_i x \in \mc X_k-\mc W.$
Thus,
$x\in A^{-1}(\mc X_k-\mc W)$ for every $A\in\mathscr A$, which
proves~\eqref{eq:poly:vertex_predecessor}.
We next prove the vertex reduction for $\mc Y_k$. Fix
$k\geq1$, an initial state $x_0$, a disturbance sequence
$w_0,\ldots,w_{k-1}\in\mc W$, and a matrix sequence
$A_0,\ldots,A_{k-1}\in\mathscr A$. For each
$j=0,\ldots,k-1$, write
\[
    A_j
    =
    \sum_{r=1}^{s}
    \lambda_{j,r}\overline A_r,
    \qquad
    \lambda_{j,r}\geq0,
    \qquad
    \sum_{r=1}^{s}\lambda_{j,r}=1.
\]
Expanding the state recursion gives
\[
    x_k
    =
    \sum_{\sigma\in\{1,\ldots,s\}^{k}}
    \alpha_\sigma x_k^\sigma,
    \qquad
    \alpha_\sigma
    :=
    \prod_{j=0}^{k-1}\lambda_{j,\sigma_j},
\]
where $x_k^\sigma$ is the state reached from the same $x_0$ and under
the same disturbance sequence, but with the vertex-matrix sequence
$\overline A_{\sigma_0},\ldots,
    \overline A_{\sigma_{k-1}}.$
The coefficients $\alpha_\sigma$ are nonnegative and satisfy
$\sum_{\sigma\in\{1,\ldots,s\}^{k}}\alpha_\sigma=1.$
Therefore, if $x_k^\sigma\in\mc X$ for every vertex sequence
$\sigma$, then $x_k\in\mc X$ by convexity of $\mc X$. The converse is
immediate because every vertex sequence is admissible for
$\mathscr A$. Thus, requiring the state constraint at time $k$ for all
matrix sequences in $\mathscr A$ is equivalent to requiring it for all
vertex-matrix sequences. This proves~\eqref{eq:poly:vertex_Yk}.

Equations~\eqref{eq:poly:vertex_predecessor}
and~\eqref{eq:poly:vertex_Yk} reduce the polytopic case exactly to the
finite-family case. Hence, $\mc X_k$ and $\mc Y_k$ are polyhedral for
every $k\in\N$. Finally, if $\mc M$ is finitely determined, then
$\mc M=\mc X_k$ for some $k\in\N$, and is therefore polyhedral.
\hfill$\square$

\bibliographystyle{elsarticle-num}
\bibliography{references}

@article{Tarski1955LatticeFixpoint,
  author  = {Tarski, A.},
  title   = {A Lattice-Theoretical Fixpoint Theorem and Its Applications},
  journal = {Pacific Journal of Mathematics},
  volume  = {5},
  number  = {2},
  pages   = {285--309},
  year    = {1955}
}

@book{DaveyPriestley2002,
  author    = {Davey, B. A. and Priestley, H. A.},
  title     = {Introduction to Lattices and Order},
  edition   = {2},
  publisher = {Cambridge University Press},
  year      = {2002}
}

@book{Birkhoff1967LatticeTheory,
  author    = {Birkhoff, G.},
  title     = {Lattice Theory},
  edition   = {3},
  publisher = {American Mathematical Society},
  year      = {1967}
}

@article{Cousot1979,
  author  = {Cousot, P. and Cousot, R.},
  title   = {Constructive Versions of {T}arski's Fixed Point Theorems},
  journal = {Pacific Journal of Mathematics},
  volume  = {82},
  number  = {1},
  pages   = {43--57},
  year    = {1979}
}

@article{Shorten2007Stability,
  author  = {Shorten, R. and Wirth, F. and Mason, O. and Wulff, K. and King, C.},
  title   = {Stability Criteria for Switched and Hybrid Systems},
  journal = {SIAM Review},
  volume  = {49},
  number  = {4},
  pages   = {545--592},
  year    = {2007}
}

@article{Wirth2002JSR,
  author  = {Wirth, F.},
  title   = {The Generalized Spectral Radius and Extremal Norms},
  journal = {Linear Algebra and its Applications},
  volume  = {342},
  pages   = {17--40},
  year    = {2002}
}

@book{StoltenbergHansen1994,
  author    = {Stoltenberg-Hansen, V. and Lindstr{\"o}m, I. and Griffor, E. R.},
  title     = {Mathematical Theory of Domains},
  publisher = {Cambridge University Press},
  year      = {1994}
}

@book{BlanchiniMiani2015,
  author    = {Blanchini, F. and Miani, S.},
  title     = {Set-Theoretic Methods in Control},
  edition   = {2},
  series    = {Systems \& Control: Foundations \& Applications},
  publisher = {Birkh{\"a}user},
  year      = {2015}
}

@book{Aubin1991ViabilityTheory,
  author    = {Aubin, J.-P.},
  title     = {Viability Theory},
  series    = {Systems \& Control: Foundations \& Applications},
  publisher = {Birkh{\"a}user},
  year      = {1991}
}

@article{Blanchini1999SetInvariance,
  author  = {Blanchini, F.},
  title   = {Set Invariance in Control},
  journal = {Automatica},
  volume  = {35},
  number  = {11},
  pages   = {1747--1767},
  year    = {1999}
}

@article{Bertsekas1972InfiniteReachability,
  author  = {Bertsekas, D. P.},
  title   = {Infinite-Time Reachability of State-Space Regions by Using Feedback Control},
  journal = {IEEE Transactions on Automatic Control},
  volume  = {17},
  number  = {5},
  pages   = {604--613},
  year    = {1972}
}

@article{GutmanCwikel1987MaximalConstraints,
  author  = {Gutman, P.-O. and Cwikel, M.},
  title   = {An Algorithm to Find Maximal State Constraint Sets for Discrete-Time Linear Dynamical Systems with Bounded Controls and States},
  journal = {IEEE Transactions on Automatic Control},
  volume  = {32},
  number  = {3},
  pages   = {251--254},
  year    = {1987}
}

@article{GilbertTan1991MaximalOutputAdmissible,
  author  = {Gilbert, E. G. and Tan, K. T.},
  title   = {Linear Systems with State and Control Constraints: The Theory and Application of Maximal Output Admissible Sets},
  journal = {IEEE Transactions on Automatic Control},
  volume  = {36},
  number  = {9},
  pages   = {1008--1020},
  year    = {1991}
}

@article{KolmanovskyGilbert1998DisturbanceInvariant,
  author  = {Kolmanovsky, I. and Gilbert, E. G.},
  title   = {Theory and Computation of Disturbance Invariant Sets for Discrete-Time Linear Systems},
  journal = {Mathematical Problems in Engineering},
  volume  = {4},
  number  = {4},
  pages   = {317--367},
  year    = {1998}
}

@article{DoreaHennet1999ABInvariant,
  author  = {D{\'o}rea, C. E. T. and Hennet, J.-C.},
  title   = {{(A,B)}-Invariant Polyhedral Sets of Linear Discrete-Time Systems},
  journal = {Journal of Optimization Theory and Applications},
  volume  = {103},
  number  = {3},
  pages   = {521--542},
  year    = {1999}
}

@article{RakovicFiacchini2008MaximalApproximation,
  author  = {Rakovi{\'c}, S. V. and Fiacchini, M.},
  title   = {Invariant Approximations of the Maximal Invariant Set or ``Encircling the Square''},
  journal = {IFAC Proceedings Volumes},
  volume  = {41},
  number  = {2},
  pages   = {6377--6382},
  year    = {2008}
}

@article{RunggerTabuada2017RobustControlledInvariant,
  author  = {Rungger, M. and Tabuada, P.},
  title   = {Computing Robust Controlled Invariant Sets of Linear Systems},
  journal = {IEEE Transactions on Automatic Control},
  volume  = {62},
  number  = {7},
  pages   = {3665--3670},
  year    = {2017}
}

@article{EsterhuizenAschenbruckStreif2020MaximalRPI,
  author  = {Esterhuizen, W. and Aschenbruck, T. and Streif, S.},
  title   = {On Maximal Robust Positively Invariant Sets in Constrained Nonlinear Systems},
  journal = {Automatica},
  volume  = {119},
  pages   = {109044},
  year    = {2020}
}

@article{WangJungersOng2021MaximalInvariant,
  author  = {Wang, Z. and Jungers, R. M. and Ong, C.-J.},
  title   = {Computation of the Maximal Invariant Set of Discrete-Time Linear Systems Subject to a Class of Non-Convex Constraints},
  journal = {Automatica},
  volume  = {125},
  pages   = {109463},
  year    = {2021}
}

@article{RakovicZhang2023ImplicitMPI,
  author  = {Rakovi{\'c}, S. V. and Zhang, S.},
  title   = {The Implicit Maximal Positively Invariant Set},
  journal = {IEEE Transactions on Automatic Control},
  volume  = {68},
  number  = {8},
  pages   = {4738--4753},
  year    = {2023}
}

@book{Tabuada2009VerificationHybridSystems,
  author    = {Tabuada, P.},
  title     = {Verification and Control of Hybrid Systems: A Symbolic Approach},
  publisher = {Springer},
  year      = {2009}
}

@book{BeltaYordanovGol2017FormalMethods,
  author    = {Belta, C. and Yordanov, B. and Gol, E. A.},
  title     = {Formal Methods for Discrete-Time Dynamical Systems},
  publisher = {Springer},
  year      = {2017}
}

@inproceedings{PluymersEtAl2005PolyhedralInvariant,
  author    = {Pluymers, B. and Rossiter, J. A. and Suykens, J. A. K. and {De Moor}, B.},
  title     = {The Efficient Computation of Polyhedral Invariant Sets for Linear Systems with Polytopic Uncertainty},
  booktitle = {Proceedings of American Control Conference},
  pages     = {804--809},
  year      = {2005}
}

@article{BertsekasRhodes1971MinimaxReachability,
  author  = {Bertsekas, D. P. and Rhodes, I. B.},
  title   = {On the Minimax Reachability of Target Sets and Target Tubes},
  journal = {Automatica},
  volume  = {7},
  number  = {2},
  pages   = {233--247},
  year    = {1971}
}

@article{KurzhanskiyVaraiya2007EllipsoidalReachability,
  author  = {Kurzhanskiy, A. A. and Varaiya, P.},
  title   = {Ellipsoidal Techniques for Reachability Analysis of
             Discrete-Time Linear Systems},
  journal = {IEEE Transactions on Automatic Control},
  volume  = {52},
  number  = {1},
  pages   = {26--38},
  year    = {2007}
}

@article{MitchellBayenTomlin2005HJReachability,
  author  = {Mitchell, I. M. and Bayen, A. M. and Tomlin, C. J.},
  title   = {A Time-Dependent Hamilton--Jacobi Formulation of Reachable
             Sets for Continuous Dynamic Games},
  journal = {IEEE Transactions on Automatic Control},
  volume  = {50},
  number  = {7},
  pages   = {947--957},
  year    = {2005}
}

@book{Khalil2002NonlinearSystems,
  author    = {Khalil, H. K.},
  title     = {Nonlinear Systems},
  edition   = {3},
  publisher = {Prentice Hall},
  year      = {2002}
}

@article{HirataOhta2008ExactMOAS,
  author  = {Hirata, K. and Ohta, Y.},
  title   = {Exact Determinations of the Maximal Output Admissible Set
             for a Class of Nonlinear Systems},
  journal = {Automatica},
  volume  = {44},
  number  = {2},
  pages   = {526--533},
  year    = {2008}
}

@article{Wang2023computation,
  title={Computation of invariant sets via immersion for discrete-time nonlinear systems},
  author={Wang, Z. and Jungers, R. and Ong, C. J.},
  journal={Automatica},
  volume={147},
  pages={110686},
  year={2023},
  publisher={Elsevier}
}

@article{Ossareh2024complexity,
  title={On complexity bounds for the maximal admissible set of linear time-invariant systems},
  author={Ossareh, H. R. and Kolmanovsky, I.},
  journal={IEEE Transactions on Automatic Control},
  volume={69},
  number={9},
  pages={6389--6396},
  year={2024},
  publisher={IEEE}
}

@inproceedings{AthanasopoulosBitsoris2009MaximalControlledInvariant,
  author    = {Athanasopoulos, N. and Bitsoris, G.},
  title     = {A Novel Approach to the Computation of the Maximal
               Controlled Invariant Set for Constrained Linear Systems},
  booktitle = {European Control Conference (ECC)},
  pages     = {3124--3129},
  year      = {2009},
  month     = aug,
  address   = {Budapest, Hungary}
}

@inproceedings{Shang2004MaximalRobustControlledInvariant,
  author    = {Shang, Y.},
  title     = {The Maximal Robust Controlled Invariant Set of Uncertain Switched Systems},
  booktitle = {Proceedings of American Control Conference},
  volume    = {6},
  pages     = {5195--5196},
  address   = {Boston, MA, USA},
  year      = {2004}
}

@inproceedings{LinAntsaklis2002RobustControlledInvariant,
  author    = {Lin, Hai and Antsaklis, Panos J.},
  title     = {Robust Controlled Invariant Sets for a Class of Uncertain Hybrid Systems},
  booktitle = {Proceedings of the 41st IEEE Conference on Decision and Control},
  volume    = {3},
  pages     = {3180--3181},
  address   = {Las Vegas, NV, USA},
  month     = dec,
  year      = {2002}
}

@article{RakovicVillanueva2017PolynomialMPI,
  author  = {Rakovi{\'c}, S. V. and Villanueva, M. E.},
  title   = {The Maximal Positively Invariant Set: Polynomial Setting},
  journal = {arXiv preprint arXiv:1712.01150},
  year    = {2017}
}

@inproceedings{BenlaoukliEtAl2009PWA,
  author    = {Benlaoukli, H. and Hovd, M. and Olaru, S. and Boucher, P.},
  title     = {On the Construction of Invariant Sets for Piecewise Affine Systems Using the Transition Graph},
  booktitle = {Proceedings of the 2009 IEEE International Conference on Control and Automation},
  pages     = {122--127},
  address   = {Christchurch, New Zealand},
  year      = {2009}
}

@article{AthanasopoulosSmpoukisJungers2017InvariantSets,
  author  = {Athanasopoulos, N. and Smpoukis, K. and Jungers, R. M.},
  title   = {Invariant Sets Analysis for Constrained Switching Systems},
  journal = {IEEE Control Systems Letters},
  volume  = {1},
  number  = {2},
  pages   = {256--261},
  year    = {2017}
}

@article{AthanasopoulosJungers2018Hybrid,
  author  = {Athanasopoulos, N. and Jungers, R. M.},
  title   = {Combinatorial Methods for Invariance and Safety of Hybrid Systems},
  journal = {Automatica},
  volume  = {98},
  pages   = {130--140},
  year    = {2018}
}

@article{RakovicGielen2014InvariantFamilies,
  author  = {Rakovi{\'c}, S. V. and Gielen, R. H.},
  title   = {Positively Invariant Families of Sets for Interconnected and Time-Delay Discrete-Time Systems},
  journal = {SIAM Journal on Control and Optimization},
  volume  = {52},
  number  = {4},
  pages   = {2261--2283},
  year    = {2014}
}

@book{BorrelliBemporadMorari2017PredictiveControl,
  author    = {Borrelli, Francesco and Bemporad, Alberto and Morari, Manfred},
  title     = {Predictive Control for Linear and Hybrid Systems},
  publisher = {Cambridge University Press},
  year      = {2017},
  isbn      = {978-1-107-01688-0}
}

@article{Janak2022ExactInvariant,
  author  = {D. Janak and P.-L. Garoche and B. A{\c c}{\i}kme{\c s}e},
  title   = {Exact Computation of Maximal Invariant Sets for Safe {Markov} Chains---Lattice Theoretic Approach},
  journal = {IEEE Transactions on Automatic Control},
  volume  = {67},
  number  = {12},
  pages   = {6980--6986},
  month   = dec,
  year    = {2022}
}

@article{DeyBhasin2024Computation,
  author  = {A. Dey and S. Bhasin},
  title   = {Computation of Maximal Admissible Robust Positive
             Invariant Sets for Linear Systems With Parametric and
             Additive Uncertainties},
  journal = {IEEE Control Systems Letters},
  volume  = {8},
  pages   = {1775--1780},
  year    = {2024}
}

@inproceedings{KouramasEtAl2005MinimalRPI,
  author    = {K. I. Kouramas and S. V. Rakovi{\'c} and
               E. C. Kerrigan and J. C. Allwright and D. Q. Mayne},
  title     = {On the Minimal Robust Positively Invariant Set for
               Linear Difference Inclusions},
  booktitle = {Proceedings of the 44th IEEE Conference on Decision and
               Control and the European Control Conference},
  address   = {Seville, Spain},
  pages     = {2296--2301},
  month     = dec,
  year      = {2005}
}

\end{document}